\newtheorem{thm}{Theorem}[section]
\newtheorem*{thm*}{Theorem}
\newtheorem{claim}[thm]{Claim}
\newtheorem{example}[thm]{Example}
\newtheorem{pb}[thm]{Problem}
\newtheorem{prop}[thm]{Proposition}
\newtheorem{defn}[thm]{Definition}
\newtheorem{lemma}[thm]{Lemma}
\newtheorem{cor}[thm]{Corollary}
\newtheorem{conj}[thm]{Conjecture}
\newtheorem{rmk}[thm]{Remark}
\newtheorem{fact}[thm]{Fact}
\numberwithin{equation}{subsection}
\newcommand{\R}{\mathbb{R}}
\newcommand{\Z}{\mathbb{Z}}
\newcommand{\Q}{\mathbb{Q}}
\newcommand{\B}{\mathcal{B}}
\newcommand{\F}{\mathcal{F}}
\newcommand{\bdry}{\partial}
\newcommand{\hh}{\mathfrak{h}}
\newcommand{\op}{\operatorname}
\newcommand{\bs}{\boldsymbol}
\newcommand*\bigcdot{\mathpalette\bigcdot@{.5}}
\newcommand*\bigcdot@[2]{\mathbin{\vcenter{\hbox{\scalebox{#2}{$\m@th#1\bullet$}}}}}
\newcommand{\be}{\begin{enumerate}}
\newcommand{\ee}{\end{enumerate}}
\begin{document}

\title[Foliations, contact structures and their interactions]{Foliations, contact structures and their interactions in dimension three}

\author{Vincent Colin}
\address{Universit\'e de Nantes, UMR 6629 du CNRS, 44322 Nantes, France}
\email{vincent.colin@univ-nantes.fr}

\author{Ko Honda}
\address{University of California, Los Angeles, Los Angeles, CA 90095}
\email{honda@math.ucla.edu} \urladdr{http://www.math.ucla.edu/\char126 honda}

\date{}

\keywords{}

\subjclass[2010]{Primary 57M50, 37B40; Secondary 53C15.}

\thanks{VC supported by ANR Quantact and KH supported by NSF Grants DMS-1406564 and DMS-1549147.}

\begin{abstract}
We survey the interactions between foliations and contact structures in dimension three, with an emphasis on sutured manifolds and invariants of sutured contact manifolds. This paper contains two original results: the fact that a closed orientable irreducible $3$-manifold $M$ with nonzero second homology carries a hypertight contact structure and the fact that an orientable, taut, balanced sutured $3$-manifold is not a product if and only if it carries a contact structure with nontrivial cylindrical contact homology. The proof of the second statement uses the Handel-Miller theory of end-periodic diffeomorphisms of end-periodic surfaces.
\end{abstract}

\maketitle

\tableofcontents

\section{Introduction}

Codimension one foliations are a powerful tool for probing the topology of an ambient manifold of dimension three. Since the work of Gabai \cite{Ga}, {\em taut} foliations have been identified as a particularly relevant class of foliations, at the epicenter of many breakthroughs such as the resolution of the Property $R$ and $P$ conjectures for knots respectively by Gabai \cite{Ga1} and Kronheimer-Mrowka~\cite{KM}. The theory of sutured manifolds and their decompositions \cite{Ga} provides a (finite depth) taut foliation on every closed orientable irreducible $3$-manifold with nontrivial integral second homology. The existence of a taut foliation on an irreducible $3$-manifold $M$ is currently conjectured to be equivalent to the left-orderability of the fundamental group of $M$ \cite{BGW}.

Contact structures are more flexible in nature, but the results of Eliashberg-Thurston~\cite{ET} and Vogel~\cite{Vo1} give a correspondence between the two worlds, namely an essentially unique way to deform a taut foliation without a torus leaf into a {\em tight} contact structure. The stability of contact structures makes them appear as a discrete version of foliations. It is also important that tight contact structures might exist when taut foliations do not (for example on $S^3$), extending the possible range of investigations.

More interesting is that they are well-adapted to pseudo-holomorphic curve techniques. Amongst those, two invariants emerge: embedded contact homology and  (cylindrical) contact homology. Both are homologies of chain complexes generated by finite products of periodic orbits of a Reeb vector field for a contact structure and whose differentials count certain pseudo-holomorphic curves in the symplectization of the contact manifold. Embedded contact homology (ECH) was defined by Hutchings \cite{Hu1,Hu2,Hu3} and Hutchings-Taubes \cite{HT1,HT2} and was shown to be a topological invariant \cite{T2} isomorphic to the Heegaard Floer homology of Ozsv\'ath-Szab\'o \cite{OSz1,OSz2} in \cite{CGH0}-\cite{CGH3}. Contact homology, proposed by Eliashberg-Givental-Hofer~\cite{EGH} and defined in full generality by Pardon~\cite{Par} and Bao-Honda~\cite{BH}, is a contact invariant which is sensitive to the particular choice of contact structure on a given manifold.

The theory of convex surfaces in contact $3$-manifolds, developed by Giroux~\cite{Gi1}, is also surprisingly close to the theory of sutured $3$-manifolds, invented by Gabai to construct taut foliations. Indeed both interact in a nested way, making it possible to define invariants of sutured contact manifolds and develop a gluing theory for contact manifolds parallel to that of foliated sutured ones \cite{HKM1}-\cite{HKM6}, \cite{Co1,Co2}, \cite{CH, CGHH}. This combination of contact geometry and foliation theory through its holomorphic invariants --- and in particular the {\em contact class} associated to a contact structure in the Heegaard Floer group~\cite{OSz3} --- has expanded the applicability of Gabai's work and provided striking new results such as the proof of the Gordon conjecture by Kronheimer, Mrowka, Ozsv\'ath and Szab\'o~\cite{KMOS} or the characterization of fibered knots by Ghiggini~\cite{Gh} and Ni~\cite{Ni}.  The existence of a taut foliation on an irreducible rational homology sphere $M$ is now conjectured to have a complete characterization in terms of Heegaard Floer homology (or equivalently ECH): the rank of the Heegaard Floer hat group of $M$ is strictly larger than the order of $H_1 (M;\Z)$.

The goal of this paper is to survey these interactions, with an emphasis on sutured manifolds and invariants of sutured contact manifolds.  We also discuss some open problems. The paper contains two original results: the fact that a closed orientable irreducible $3$-manifold $M$ with nonzero second homology carries a hypertight contact structure (Theorem~\ref{thm: hypertight}) and the fact that an orientable, taut, balanced sutured $3$-manifold is not a product if and only if it carries a contact structure with nontrivial cylindrical contact homology (Theorem~\ref{thm: cylindrical}). The proof of Theorem~\ref{thm: cylindrical} uses the Handel-Miller theory of end-periodic diffeomorphisms of end-periodic surfaces. We also discuss work in progress with Ghiggini and Spano \cite{CGHS} to prove the isomorphism between the sutured versions of Heegaard Floer homology and ECH.

\section{A brief overview of foliations and contact structures}

\subsection{Definitions}

On a $3$-manifold $M$, there are two classes of homogeneous plane fields. Given a local nonvanishing $1$-form $\alpha$ with kernel a homogeneous plane field $\xi$, either the $3$-form $\alpha \wedge d\alpha$ vanishes everywhere or is nonzero everywhere. In the first case, the plane field $\xi$ integrates into a {\em foliation} and is locally defined by an equation $dz=0$ in $\R^3$. It is the tangent space to the local surfaces $\{ z=\mbox{const}\}$. In the second case, it is locally given by an equation $dz-ydx=0$ and is a {\em contact structure}. In these coordinates, the vector field $\partial_y$ is tangent to the contact plane and the slope of the line field $\xi \cap \{dy=0\}$  in the $(x,z)$ coordinates is $y$: the plane field $\xi$ rotates with $y$, a manifestation of its maximal nonintegrability. When $\xi$  is a contact structure, the sign of the $3$-form $\alpha \wedge d\alpha$ is independent of the choice of $\alpha$ and hence $\xi$ induces an orientation of $M$.

In this paper, we assume that all plane fields are cooriented and $3$-manifolds oriented.

The topology of a $3$-manifold is often tested by looking at its submanifolds of dimensions $1$ and $2$. In the presence of a plane field $\xi$, one distinguishes horizontal curves (i.e., curves everywhere tangent to $\xi$) and transverse ones. Whether $\xi$ is integrable or contact makes a huge difference: every horizontal curve of a foliation must stay in a leaf, whereas any two points can be connected by a horizontal (or transverse) arc in a contact manifold. If $\xi$ is contact, a horizontal curve will usually be called {\em Legendrian}. Also in the contact case, there is a preferred class of transverse vector fields called {\em Reeb vector fields} which are the ones whose flow preserves $\xi$.

If $S$ is a surface in $(M,\xi)$, we define its {\em characteristic foliation} to be the singular foliation $\xi S$ of $S$ generated by the singular line field $\xi\cap TS$. The singularities are the points $p\in S$ where $\xi_p=T_pS$. For a generic $S$, they are isolated. Generically the singularities of $\xi S$ are either saddles or centers when $\xi$ is integrable, and saddles or sources/sinks when $\xi$ is a contact structure. If both $\xi$ and $S$ are oriented, then the foliation $\xi S$ is also oriented.

The main tool for analyzing a $3$-dimensional contact manifold is convex surface theory, which was developed by Giroux~\cite{Gi1}. When $\xi$ is a contact structure, a closed surface $S\subset (M,\xi)$ is {\em convex} if there exists a vector field $X$ transverse to $S$ and whose flow preserves $\xi$. The convexity condition is a $C^\infty$-generic condition. If $S$ is convex, the {\em dividing set}
$$\Gamma_S =\{ x\in S~|~ X(x)\in \xi(x)\}$$
is an oriented embedded $1$-submanifold of $S$ (i.e., a multicurve) transverse to the characteristic foliation $\xi S$. It decomposes $S$ into regions $R_+$ and $R_-$ such that $X$ is positively (resp.\ negatively) transverse to $\xi$ on $R_+$ (resp.\ $R_-$). It turns out that the dividing set $\Gamma_S$ does not depend on the choice of the vector field $X$ up to isotopy through curves transverse to the characteristic foliation $\xi S$. The dividing set $\Gamma_S$ completely determines the germ of $\xi$ near $S$. When $\partial M$ is convex, the dividing set $\Gamma_{\partial M}$ gives $M$ the structure of a sutured manifold; see Section~\ref{section: suture}.

\subsection{Flexibility vs.\ rigidity}

We review the boundary between flexibility and rigidity both for foliations and contact structures.

\subsubsection{Foliations}

We first discuss the situation for foliations.  A {\em Reeb component} is a foliation of the solid torus $S^1 \times D^2$ such that the boundary $S^1\times \bdry D^2$ is a leaf and $int(S^1\times D^2)$ is foliated by an $S^1$-family of planes which ``winds around" in such a way that the characteristic foliation of each meridian disk $\{\theta\} \times D^2$ consists of concentric circles with one singularity at the center. The presence of Reeb components makes foliations flexible and subject to an $h$-principle. In fact Thurston~\cite{Th} showed that on a given $3$-manifold every plane field is homotopic to an integrable one, typically with many Reeb components. Along the same lines, by inserting Reeb components Eynard-Bontemps~\cite{E} showed that two integrable plane fields that are homotopic as plane fields are homotopic through integrable plane fields.

One gets more rigidity by considering foliations without Reeb components, or the more restrictive class of {\em taut} foliations, for which there is a closed transverse curve that passes through every leaf.  This condition prevents Reeb components from existing. The existence of a taut foliation imposes strong restrictions on the ambient manifold: its universal cover is $\R^3$ by Palmeira \cite{Pa} and all the leaves are $\pi_1$-injective by Novikov \cite{No}.  An equivalent definition for a foliation to be taut is to have a volume-preserving transverse vector field. Moreover, a vector field transverse to a taut foliation has no contractible periodic orbit. Finally, taut foliations only exist in  a finite number of homotopy classes of plane fields \cite{Ga2}.

\subsubsection{Contact structures}

On the contact side, a contact structure $\xi$ is {\em overtwisted} if there exists an embedded disk $D$ such that $\xi=TD$ along $\bdry D$; otherwise it is {\em tight}. In this paper we emphasize two subclasses of tight contact structures, mostly due to their close connections with taut foliations:
\be
\item {\em universally tight} contact structures, i.e., tight contact structures that remain tight when pulled back to the universal cover; and
\item {\em hypertight} contact structures, i.e., contact structures that admit Reeb vector fields with no contractible orbit.
\ee
By a theorem of Hofer~\cite{Ho} that states that an overtwisted contact form on a closed manifold always admits a contractible Reeb orbit, a hypertight contact structure is always universally tight. The hypertightness condition also insures that the cylindrical contact homology is well-defined and hypertight contact forms are typically easier to deal with when computing holomorphic invariants; see Section~\ref{section: invariant}.

Similar to foliations with Reeb components, overtwisted contact structures are subject to an $h$-principle which was discovered by Eliashberg~\cite{El}: there is exactly one homotopy class of overtwisted contact structures in every homotopy class of $2$-plane fields.  However, rigidity shows up again when considering higher homotopy groups in the space of overtwisted contact structures. For example, we have the following:

\begin{thm}[Chekanov-Vogel~\cite{Vo2}]
If $\xi$ is an overtwisted contact structure of $S^3$ whose Hopf number is $1$, then there exists a loop of overtwisted contact structures based at $\xi$ that is contractible in the space of plane fields, but not in the space of overtwisted contact structures.
\end{thm}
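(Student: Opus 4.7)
The plan is to exhibit an explicit loop $\gamma$ of overtwisted contact structures based at $\xi$, verify by direct inspection that $\gamma$ is nullhomotopic as a loop of plane fields, and then detect the non-triviality of $\gamma$ in the space of overtwisted contact structures via a parameterized monodromy invariant.

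For the construction of $\gamma$, the most natural source of loops is the pullback action of $\op{Diff}(S^3)$: overtwistedness is preserved by diffeomorphism, so any loop of diffeomorphisms produces a loop of overtwisted contact structures. A concrete candidate is the rotation $\phi_t(z_1,z_2) = (e^{2\pi i t} z_1, z_2)$ on $S^3 \subset \C^2$, which represents the generator of $\pi_1(SO(4)) = \Z/2$ under the inclusion $SO(4) \hookrightarrow \op{Diff}(S^3)$. Set $\gamma(t) = \phi_t^* \xi$; this is an honest loop in the space $\mathcal{OT}$ of overtwisted contact structures because $\phi_1 = \op{id}_{S^3}$.

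To see that $\gamma$ contracts in the space $\mathcal{P}_\xi$ of plane fields homotopic to $\xi$, one analyzes the evaluation map $SO(4) \to \mathcal{P}_\xi$, $g \mapsto g^*\xi$. The induced homomorphism $\pi_1(SO(4)) \to \pi_1(\mathcal{P}_\xi)$ can be computed by obstruction theory in terms of the Hopf invariant of $\xi$, and the hypothesis that this invariant equals $1$ forces the generator of $\pi_1(SO(4))$ to be sent to zero. Equivalently, one writes down an explicit nullhomotopy of $\gamma$ through plane fields by interpolating via the Hopf fibration structure, exploiting that in the Hopf-number-$1$ component of $\mathcal{P}$ the obstruction class vanishes.

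The main obstacle is to show that $\gamma$ is \emph{not} nullhomotopic in the space $\mathcal{OT}_\xi$ of overtwisted contact structures. The approach is to construct a $\pi_1$-level invariant $\mu$ for loops in $\mathcal{OT}_\xi$ --- for instance, via a parameterized version of cylindrical contact homology, or by a signed count of bifurcations of overtwisted disks along a candidate nullhomotopy --- and to verify that $\mu(\gamma) \neq 0$, with the Hopf-number-$1$ hypothesis entering as a parity condition that guarantees this count is odd. The genuine difficulty lies in constructing and proving the invariance of $\mu$: a nullhomotopy in $\mathcal{OT}_\xi$ is considerably more restrictive than one in $\mathcal{P}_\xi$, since it must preserve overtwistedness pointwise over the bounding disk, and $\mu$ must be designed to detect precisely this extra structure beyond what Eliashberg's $h$-principle provides at the level of $\pi_0$.
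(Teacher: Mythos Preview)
The paper does not give its own proof of this theorem: it is quoted as a result of Chekanov--Vogel with a citation to \cite{Vo2}, and the text immediately moves on to pose a problem about foliations with Reeb components. So there is no argument in the paper to compare your proposal against.

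On the merits of your outline itself: the first two steps (producing the loop via the $SO(4)$-action and arguing its contractibility among plane fields by an obstruction/Hopf-invariant computation) are reasonable and in the spirit of what is actually done. The genuine gap is your third step. You do not construct the invariant $\mu$; you list two vague candidates (``parameterized cylindrical contact homology'' and ``signed count of bifurcations of overtwisted disks'') and then concede that ``the genuine difficulty lies in constructing and proving the invariance of $\mu$.'' That difficulty \emph{is} the theorem. Neither suggestion works as stated: overtwisted contact structures have trivial contact homology, so a naive parametric version sees nothing, and a bifurcation count of overtwisted disks is not known to be a well-defined invariant of loops without substantial additional structure. As written, your plan stops exactly where the content begins.

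For comparison, the argument in \cite{Vo2} detects the non-contractibility by an entirely different mechanism: it uses the existence and classification of \emph{non-loose} Legendrian unknots in the overtwisted $(S^3,\xi)$ together with an analysis of the contact mapping class group via the fibration $\op{Diff}(S^3,\xi)\to \op{Diff}(S^3)\to \{\text{contact structures isotopic to }\xi\}$. The hypothesis on the Hopf number enters because it is the regime in which a suitable non-loose unknot exists and is essentially unique, giving a rigid object on which the loop acts nontrivially. This is the missing idea in your sketch.
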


\begin{pb}
Does the same phenomenon hold for foliations with Reeb components?
\end{pb}

Tight contact structures have strong rigidity properties and admit a roughly classification in dimension three~\cite{Co4,CGiH,HKM2}: They exist in a finite number of homotopy classes of plane fields. If $M$ is atoroidal (e.g., hyperbolic), it carries finitely many --- this number may be zero --- tight contact structures up to isotopy. If $M$ is toroidal and irreducible, it carries infinitely many tight contact structures up to diffeomorphism. Finally, the class of tight contact structures is stable under connected sum \cite{Co1}.

\begin{pb}
Does every hyperbolic $3$-manifold carry a tight contact structure?
\end{pb}

Compare this to the situation for taut foliations where Roberts-Shareshian-Stein \cite{RSS} showed that there are infinitely many hyperbolic $3$-manifolds without taut foliations; there are other obstructions due to Calegari-Dunfield~\cite{CD} and Kronheimer-Mrowka-Ozsv\'ath-Szab\'o~\cite{KMOS}.

\section{From foliations to contact structures}

\subsection{Perturbing foliations and torsion}

The link between foliations and contact structures was discovered by Eliashberg and Thurston \cite{ET}.

\begin{thm}[Eliashberg-Thurston]\label{thm: eliashberg-thurston}
Every $C^2$ foliation on a closed $3$-manifold different from the foliation by spheres of $S^1 \times S^2$ is a limit of a sequence of positive contact structures and also a sequence of negative contact structures. Moreover, if the foliation is taut, then every contact structure $C^0$-close to it is semi-fillable and universally tight.
\end{thm}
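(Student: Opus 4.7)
The first-order Ansatz is to write $\F = \ker \alpha$ (with $\alpha\wedge d\alpha = 0$) and look for a small $1$-form $\beta$ so that $\alpha_\epsilon = \alpha + \epsilon\beta$ is positive contact for $\epsilon > 0$ small. Expanding,
$$\alpha_\epsilon \wedge d\alpha_\epsilon = \epsilon\,(\alpha \wedge d\beta + \beta \wedge d\alpha) + \epsilon^2\, \beta \wedge d\beta,$$
so at leading order one needs a $1$-form $\beta$ whose $3$-form $\eta(\beta) := \alpha \wedge d\beta + \beta \wedge d\alpha$ is pointwise positive on $M$ (the negative contact structure comes from $\beta \mapsto -\beta$). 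The whole first assertion therefore reduces to producing such a $\beta$.

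I would first treat local models. Near a closed leaf with nontrivial linear (in particular hyperbolic) holonomy, an explicit ``spinning'' of $\alpha$ about the leaf produces $\eta > 0$ on a tubular neighborhood; more generally, whenever a transverse arc has nontrivial return holonomy, one can propagate a twist along it to yield $\eta > 0$ on the saturation of that arc. The hard step is to assemble these local $\beta$'s into a single globally defined $1$-form satisfying $\eta > 0$ everywhere. This requires a case analysis organized by the structure theory of codimension-$1$ foliations (Reeb stability, Novikov's theorem, the behavior of foliations near exceptional minimal sets or Reeb components). On a foliation with a Reeb component, one uses an explicit model perturbation on that component; otherwise one exploits the nontrivial holonomy described above. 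The only situation in which every such device fails is the foliation of $S^1 \times S^2$ by spheres, for which all holonomy is trivial and the topology of the leaves forbids the construction; this is precisely the exception in the statement. The global assembly of $\beta$, and the pinpointing of this unique obstruction, is the main technical hurdle.

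For the second assertion, tautness of $\F$ supplies a closed $2$-form $\omega$ on $M$ with $\omega \wedge \alpha$ a positive volume form (equivalently, $\omega|_{T\F}>0$). On the collar $W = [-\delta, \delta] \times M$ with coordinate $t$, set
$$\Omega = \omega + d(t\alpha) = \omega + dt \wedge \alpha + t\, d\alpha.$$
Using $\alpha \wedge d\alpha = 0$, and noting that $\omega\wedge\omega$ vanishes because it is pulled back from the $3$-manifold $M$, one computes
$$\tfrac12\,\Omega\wedge\Omega = dt\wedge\omega\wedge\alpha + t\,\omega\wedge d\alpha;$$
the first term is a positive volume form on $W$ and dominates the second for $\delta$ small, so $(W,\Omega)$ is symplectic. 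Since $\omega$ is positive on $T\F$, it remains positive on any plane field $C^0$-close to $\F$, hence on any contact structure $\xi$ close to $\F$: the pair $(W,\Omega)$ realizes $\xi$ as a weak semi-filling.

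Universal tightness then follows by lifting to the universal cover. The form $\omega$ pulls back to a closed $2$-form $\widetilde\omega$ on $\widetilde M$ that is still positive on the pulled-back foliation, so the collar construction lifts verbatim to produce a (now noncompact) weak semi-filling of $(\widetilde M, \widetilde\xi)$. An overtwisted disc in $\widetilde\xi$ would be compactly supported and would contradict the Gromov--Eliashberg holomorphic disc argument applied in the lifted filling; hence $\widetilde\xi$ is tight. The collar construction and the passage to the universal cover are comparatively routine once the perturbation $1$-form $\beta$ is in hand; the genuine difficulty, and where I expect to spend most effort, is the global existence of $\beta$ together with the isolation of $S^1 \times S^2$ with its sphere foliation as the sole obstruction.
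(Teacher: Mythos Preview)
The paper does not give its own proof of this theorem: it is stated with attribution to Eliashberg--Thurston and cited from \cite{ET}, so there is nothing in the paper to compare your argument against. What you have sketched is essentially the original Eliashberg--Thurston strategy (linearize the contact condition, build the perturbing $1$-form $\beta$ from local models near nontrivial holonomy, assemble globally via the structure theory of codimension-one foliations; then for the taut case use a dominating closed $2$-form to build a symplectic collar and invoke Gromov--Eliashberg).

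Two small technical points. First, in your computation of $\Omega\wedge\Omega$, the term $t\,\omega\wedge d\alpha$ is in fact identically zero: both $\omega$ and $d\alpha$ are pulled back from the $3$-manifold $M$, so their wedge is a $4$-form pulled back from a $3$-manifold. Hence $\tfrac12\Omega\wedge\Omega = dt\wedge\omega\wedge\alpha$ exactly, and no smallness of $\delta$ is needed for the symplectic condition. Second, for the semi-filling you should be explicit that $W=[-\delta,\delta]\times M$ has \emph{two} boundary components, and the construction simultaneously weakly fills a positive contact structure on $\{\delta\}\times M$ and a negative one on $\{-\delta\}\times M$ (both $C^0$-close to $\F$); this is exactly the ``semi-'' in semi-fillable. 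Your outline of the universal tightness argument via the lifted filling and the compactness of a hypothetical overtwisted disk is the standard one.
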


A contact $3$-manifold $(M,\xi)$ is {\em semi-fillable} if there exists a symplectic manifold $(W,\omega)$ with contact boundary $\bdry W= (M,\xi)\sqcup (M',\xi')$ such that $\omega|_{\xi\cup \xi'}$ is symplectic. The symplectic manifold $(W,\omega)$ is called a {\em semi-filling of $(M,\xi)$.}

Theorem~\ref{thm: eliashberg-thurston} was extended by Bowden \cite{Bo1} to the case of $C^0$ foliations and it was shown in \cite{Bo2,Co5} that every contact structure close to a Reebless foliation is universally tight. In the presence of a torus leaf, Giroux noticed that the {\em torsion} phenomenon could provide different contact structures approximating the same foliation. The following is the basic example:

\begin{example}
On the $3$-torus
$$T^3 =\{ (x,y,t) \in \R/\Z \times \R/\Z\times \R/(2\pi \Z)\},$$
for any positive integer $n$ and real number $\epsilon\neq 0$, the plane field
$$\xi_n^{\epsilon} =\ker(dz+\epsilon(\cos (nx)dt-\sin(ny)dt))$$
is a contact structure, positive when $\epsilon >0$ and negative when $\epsilon<0$. By Gray's stability theorem, for a fixed $n$ all the structures $\xi_n^{\epsilon}$ with $\epsilon >0$ are isotopic, and similarly for $\epsilon<0$. Moreover, by Giroux \cite{Gi2}, two different $n$ give two nondiffeomorphic contact structures. Finally we observe that when $\epsilon$ goes to $0$, all these contact structures converge to the integrable plane field $\xi_n^0=\ker dz$.
\end{example}

More generally, let $\xi_n$ be the contact structure defined on the thickened torus
$$T^2\times I=\{ (x,y,t) \in \R/\Z \times \R/\Z \times [0,2\pi]\}$$
by the equation $\cos tdx-\sin tdy=0$.
We define the {\em torsion $\tau_{(M,\xi)}$} of a contact manifold $(M,\xi)$ to be the supremum of the integers $n$ for which there exists a contact embedding
$$(T^2\times I,\xi_n) \hookrightarrow (M,\xi).$$
One can also specify the torsion $\tau_\eta$ in a given isotopy class $\eta$ of embeddings of $T^2$. It is immediate from Eliashberg's classification of overtwisted contact structures that $\tau_\eta$ is infinite for every $\eta$. When $\xi$ is tight, $\tau_\eta=0$ in every compressible class $\eta$.

The following is still open:

\begin{pb}
Prove that $\tau_{(M,\xi)}<\infty$ whenever $(M,\xi)$ is tight.
\end{pb}

It was shown in \cite{Co3} that $\sup_{\eta\in \mathcal{D}} \tau_\eta <\infty$ when $(M,\xi)$ is universally tight,  where
$\mathcal{D}$ is the set of isotopy classes of incompressible tori that do not appear in the Jaco-Shalen-Johannson (JSJ) decomposition of $M$. However, we still do not know whether $\tau_\eta<\infty$ for the JSJ classes and whether $\tau_{(M,\xi)}<\infty$ if $(M,\xi)$ is tight but not universally tight.

It was conjectured in \cite{Co6} that the torsion phenomenon which appears in the presence of a torus leaf was the only reason for the nonuniqueness of the approximation. The answer to this problem was given by Vogel:

\begin{thm}[Vogel \cite{Vo1}] \label{thm: uniqueness}
If $\F$ is a foliation on a closed $3$-manifold which does not have a torus leaf, is not a foliation by planes, and is not a foliation by cylinders, then it has a $C^0$-neighborhood in the space of $2$-plane fields in which all positive (or negative) contact structures are isotopic.
\end{thm}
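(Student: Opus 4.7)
The plan is to show that within a sufficiently small $C^0$-neighborhood $\mathcal{U}$ of $\F$ in the space of $2$-plane fields, any two positive contact structures $\xi_0,\xi_1\in\mathcal{U}$ can be connected through a path of contact structures. Theorem~\ref{thm: eliashberg-thurston}, together with Bowden's $C^0$-extension, already guarantees that every $\xi\in\mathcal{U}$ is universally tight, so the task is to show there is only one isotopy class. The natural strategy combines branched-surface approximation of $\F$ (in the Gabai--Oertel sense) with convex surface theory and the classification of tight contact structures on simple sutured pieces.

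First I would fix a branched surface $B\subset M$ fully carrying $\F$ and chosen so that the complementary pieces of $M\setminus N(B)$ form sutured manifolds with well-understood boundary. Shrinking $\mathcal{U}$ if necessary, for every $\xi\in\mathcal{U}$ one arranges that $\bdry N(B)$ is convex and that its dividing set is isotopic to a multicurve $\Gamma$ depending only on $B$ (hence on $\F$), not on $\xi$; this uses that the characteristic foliation $\xi\,\bdry N(B)$ is $C^0$-close to the trace of $\F$. Inside $N(B)$, the contact structure is forced to have its twisting from one local leaf to the next controlled by the transverse structure of $\F$, so any two such $\xi$ are isotopic on $N(B)$ rel the convex boundary. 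On each complementary piece $P\subset M\setminus N(B)$ the prescribed dividing set reduces the classification of tight fillings to a question covered by Giroux/Honda-type uniqueness results for tight contact structures on simple sutured manifolds, and gluing via the theory of \cite{HKM1,Co1,Co2} produces the desired isotopy between $\xi_0$ and $\xi_1$.

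The three exclusions in the hypothesis enter precisely to make this scheme work. A torus leaf would permit Giroux torsion to be inserted along it, as the example on $T^3$ shows, producing infinitely many non-isotopic approximating contact structures. Foliations by planes or cylinders correspond to very special ambient topologies in which the branched-surface complements are forced to contain planar or annular boundary components; tight contact structures on such pieces are not pinned down by their dividing sets alone, which destroys the local uniqueness step. The hard part of the proof is producing the branched surface $B$ together with the dictionary that makes the dividing set on $\bdry N(B)$ canonical: translating a $C^0$-approximation statement about $\xi$ into a $C^\infty$-combinatorial statement about its dividing sets demands careful control of both the branch locus of $B$ and the perturbation of $\xi$ in a neighborhood of each branch. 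Once this dictionary is in place, the uniqueness follows from standard convex surface classification and gluing.
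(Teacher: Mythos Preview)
The paper does not contain a proof of this theorem. It is stated with attribution to Vogel and the citation \cite{Vo1}, and the surrounding text only comments on its significance (``the invariants of a contact approximation such as contact homology immediately become invariants of the foliation''). There is therefore no proof in the paper to compare your proposal against.

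As for the proposal itself viewed as a sketch of Vogel's argument: the broad shape---reduce to a combinatorial model via a branched-surface/sutured decomposition, normalize dividing sets on the pieces, and invoke classification of tight structures on the pieces---is in the right spirit, but several of the steps you describe as routine are in fact the heart of the matter. In particular, the assertion that for all $\xi$ in a small $C^0$-neighborhood of $\F$ the dividing set on $\partial N(B)$ is isotopic to a fixed multicurve independent of $\xi$ is exactly the kind of statement that fails without substantial work: $C^0$-closeness of plane fields does not by itself pin down dividing sets, and arranging this requires a delicate interplay between the branch locus, the holonomy of $\F$, and the ribbon/movie structure of the contact approximation. Likewise, ``uniqueness inside $N(B)$'' hides the real content: one must rule out extra twisting along the $I$-fibers, which is precisely where torsion would creep in if a torus leaf were present, and controlling this in the absence of torus leaves is the main technical achievement of \cite{Vo1}. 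Your explanation of the plane/cylinder exclusions is also not quite right: foliations by planes live only on $T^3$ and foliations by cylinders on $T^2$-bundles, and the issue is that in those cases every transversal neighborhood looks like $T^2\times I$, so torsion-type nonuniqueness reappears even without a compact torus leaf. If you want to pursue this, you should consult \cite{Vo1} directly; the paper under review treats the result as a black box.
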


The strength of this result is that it implies that the invariants of a contact approximation such as contact homology immediately become invariants of the foliation.

It was already known by Honda-Kazez-Mati\'c \cite{HKM3} that:

\begin{thm} \label{thm: unique tight for fibered}
If $\pi:M\to S^1$ is a fibered hyperbolic $3$-manifold (and hence has pseudo-Anosov monodromy),  then there is a unique positive contact structure $\xi_\pi$ up to isotopy that is homotopic to $T\mathcal{F}_\pi$, where $\mathcal{F}_\pi$ is the foliation by fibers.
\end{thm}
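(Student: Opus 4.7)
The plan is to combine convex surface theory on a fiber with the rigidity imposed by pseudo-Anosov monodromy, reducing uniqueness on $M$ to a classification on $\Sigma\times I$.

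First, I would arrange by a $C^\infty$-generic perturbation that a chosen fiber $\Sigma$ of $\pi$ is convex for any positive contact structure $\xi$ in the given homotopy class. Since $\xi$ and $T\mathcal{F}_\pi$ share their Euler class in $H^2(M;\Z)$, restriction gives $e(\xi)|_\Sigma=\chi(\Sigma)=2-2g$. The convex surface identities $e(\xi)|_\Sigma=\chi(R_+)-\chi(R_-)$ and $\chi(R_+)+\chi(R_-)=\chi(\Sigma)$ then force $\chi(R_-)=0$ and $\chi(R_+)=2-2g$. Tightness of $\xi$ (which is implicit in the statement, as Theorem~\ref{thm: eliashberg-thurston} provides a universally tight representative $\xi_\pi$) combined with Giroux's criterion rules out any contractible dividing curve on the genus $g\geq 2$ surface $\Sigma$. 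Hence every component of $R_-$ is an annulus bounded by a pair of parallel essential simple closed curves, and $\Gamma_\Sigma$ is a disjoint union of such pairs.

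Next I would cut $M$ along $\Sigma$ to obtain $\Sigma\times[0,1]$ with a tight contact structure and convex boundary carrying the dividing sets $\Gamma_\Sigma$ on $\Sigma\times\{0\}$ and $\phi^{-1}(\Gamma_\Sigma)$ on $\Sigma\times\{1\}$. Since $M$ is hyperbolic, hence atoroidal, $(M,\xi)$ has no Giroux torsion, and Honda's classification of tight contact structures on $\Sigma\times I$ with prescribed convex boundary applies: the contact structure on the cut piece is pinned down, up to isotopy rel.\ boundary, by a bypass sequence interpolating between the two boundary dividing sets.

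Finally, I would invoke pseudo-Anosovity of $\phi$ to eliminate the ambiguity in the choice of dividing set. Because $\phi$ preserves no isotopy class of essential multicurve, two a priori distinct candidate dividing sets for different contact structures in the homotopy class can be brought, via a controlled sequence of bypasses whose existence is forced by the monodromy dynamics, into a canonical minimal form. The main obstacle, and the heart of the argument, is carrying out this reduction in a way that assembles into a global contact isotopy of $M$ and not merely a relative isotopy on the cut piece. Pseudo-Anosovity is essential here: in the presence of an invariant multicurve (the toroidal case), extra tight contact structures appear and uniqueness fails, as reflected in the Giroux torsion phenomenon discussed earlier in the survey.
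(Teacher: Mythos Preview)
The paper does not supply a proof of this theorem: it is quoted as a result of Honda--Kazez--Mati\'c \cite{HKM3} and used as a black box (e.g.\ at the end of the proof of Theorem~\ref{thm: fibration}). So there is no ``paper's proof'' to compare against beyond the citation.

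Your outline does follow the broad strategy of \cite{HKM3}: make a fiber convex, use the Euler class constraint $\langle e(\xi),[\Sigma]\rangle=\pm\chi(\Sigma)$ together with Giroux's criterion to force the dividing set to consist of pairs of parallel essential curves, cut along the fiber, and then exploit the pseudo-Anosov monodromy. Two points, however, are genuine gaps rather than compressions.

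First, your appeal to ``Honda's classification of tight contact structures on $\Sigma\times I$'' overstates what is available. For a higher-genus $\Sigma$ there is no clean statement that a tight contact structure on $\Sigma\times[0,1]$ with prescribed convex boundary is determined up to isotopy by ``a bypass sequence''; distinct bypass sequences can yield non-isotopic tight structures, and there is no finiteness or normal-form theorem of the strength you invoke. In \cite{HKM3} one does not classify on $\Sigma\times I$ and then glue; rather one works directly with the fibered manifold and studies how the dividing set on a convex fiber changes as the fiber is isotoped around the $S^1$ direction.

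Second, and more seriously, the step where pseudo-Anosovity enters is the entire content of the theorem, and you have only asserted it. Saying that $\phi$ ``preserves no isotopy class of essential multicurve'' and that therefore bypasses can be found driving any dividing set to a canonical form is not an argument: one must actually produce those bypasses inside the given tight $(M,\xi)$, control the isotopy class of the dividing set under iteration of the monodromy, and rule out the possibility of infinitely many distinct ``minimal'' configurations. In \cite{HKM3} this is done via a delicate analysis using the stable and unstable laminations of $\phi$ and the combinatorics of how dividing curves intersect them; it occupies the bulk of that paper. Your sketch identifies the right obstacle but does not indicate how to overcome it.

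A minor point: the theorem as stated in this survey literally asserts uniqueness among \emph{all} positive contact structures homotopic to $T\mathcal{F}_\pi$, which is false as stated (Eliashberg's $h$-principle puts an overtwisted structure in every homotopy class). The intended statement, and the one proved in \cite{HKM3}, is uniqueness among \emph{tight} structures; you noticed this, but it should be said explicitly rather than called ``implicit''.
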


The contact homology of $\xi_\pi$ is thus an invariant of the pseudo-Anosov map.

\begin{pb}\label{pb: fibration}
Compute the contact homology in this case. What does it tell us about the pseudo-Anosov map?   Study its behavior under composition of monodromies, provided they stay pseudo-Anosov.  A first step was taken in Theorem~\ref{thm: fibration}, which provides a convenient Reeb vector field.
\end{pb}

\begin{pb}\label{pb: fibration2}
Given a fibration $\pi:M\to S^1$ with pseudo-Anosov monodromy, is every taut foliation which is homotopic in the space of $2$-plane fields to $T\mathcal{F}_\pi$ homotopic through taut foliations to $\mathcal{F}_\pi$?
\end{pb}

\subsection{Sutured manifold decompositions}\label{section: suture}

One way to construct taut foliations and tight contact structures on a given $3$-manifold $M$ is to proceed as follows:
\be
\item[(i)] Decompose $M$ into basic pieces.
\item[(ii)] Starting from model foliations/contact structures on these basic pieces, inductively glue them together to construct foliations/contact structures on the desired manifold $M$.
\ee
It is nontrivial to ensure that at each step the gluing is compatible with the foliation/contact structure, and even more difficult in the contact case to ensure that tightness is preserved.

In this subsection we discuss decomposing $M$ into basic pieces. A suitable decomposition, called a {\em sutured manifold decomposition}, was discovered by Gabai \cite{Ga}.

\begin{defn}
A {\em sutured $3$-manifold} is a triple $(M,\Gamma ,U(\Gamma))$, where $M$ is a compact $3$-manifold with corners, $\Gamma$ is an oriented $1$-manifold in $\partial M$ called the {\em suture}, and $U(\Gamma)=[-1,0]\times [-1,1] \times \Gamma$ is a neighborhood of $\Gamma=\{(0,0)\}\times \Gamma$ in $M$ with coordinates $(\tau,t)\in [-1,0]\times [-1,1]$, such that the following hold:
\begin{itemize}
\item $U(\Gamma)\cap \partial M =(\{0 \} \times [-1,1]\times \Gamma) \cup ([-1,0]\times \{ -1,1\} \times \Gamma)$.
\item $\partial M -(\{ 0\} \times (-1,1) \times \Gamma)$ is the disjoint union of two submanifolds which we call $R_- (\Gamma )$ and $R_+(\Gamma )$, where the orientation of $\partial M$ agrees with that of $R_+ (\Gamma)$ and is opposite that of $R_- (\Gamma )$, and the orientation of $\Gamma$ agrees with the boundary orientation of $R_\pm(\Gamma)$.
\item The corners of $M$ are precisely $\{ 0\} \times \{ \pm 1\} \times \Gamma$.
\end{itemize}
We refer to $A(\Gamma)=\{0 \} \times [-1,1]\times \Gamma$ as the {\em vertical annular neighborhood of $\Gamma$} in $\partial M$.
\end{defn}

Our definition is slightly different from Gabai's original one from \cite{Ga}: we introduced the neighborhoods $U(\Gamma)$ and use smooth manifolds with corners instead of ones with boundary. We often suppress the data of the neighborhood $U(\Gamma)$, since it is usually understood.

\begin{defn}
A {\em product sutured manifold} is a sutured manifold of the form
$$(S\times [-1,1], \Gamma=\partial S\times \{0\}),$$
where $S$ is a compact oriented surface, each of whose components has nonempty boundary. Here $A(\Gamma)=\partial S\times [-1,1]$.
\end{defn}

\begin{defn}
A sutured $3$-manifold $(M,\Gamma ,U(\Gamma))$ is {\em taut} if $M$ is irreducible and $R_+(\Gamma )$ and $R_-(\Gamma )$ have minimal {\em Thurston norm}, i.e., the sum of the Euler characteristics of its nonspherical components, amongst properly embedded surfaces realizing the same homology class in $H_2 (M,\Gamma)$. It is {\em balanced} if $\chi(R_+)=\chi(R_-)$, $M$ has no closed components, and $\pi_0(A(\Gamma))\to \pi_0(\bdry M)$ is surjective.
\end{defn}

We now describe how to apply a {\em sutured manifold decomposition} of a sutured manifold $(M,\Gamma )$ into $(M',\Gamma ')$ along a surface $S$. In what follows $N(B)$ denotes a sufficiently small tubular neighborhood of $B$.

Let $S\subset (M,\Gamma)$ be an oriented properly embedded surface such that $S\pitchfork A(\Gamma), \bdry A(\Gamma)$ and:
\begin{itemize}
\item each arc component $c$ of $S\cap A(\Gamma)$ is nonseparating in $A(\Gamma)$;
\item each closed component $c$ of $S\cap A(\Gamma)$ is homologous to $\Gamma\cap A$, where $A$ is the component of $A(\Gamma)$ containing $c$;
\item no component of $S$ is a disk with boundary in $R_\pm(\Gamma )$; and
\item no component of $\partial S$ bounds a disk in $R_\pm(\Gamma )$.
\end{itemize}

Let $M'=M\setminus \op{int} (N(S))$ and let $S'_+$ (resp.\ $S'_-$) be the portion of $\partial N(S)\cap \partial M'$ where the orientation induced from $S$ agrees with (resp.\ is opposite to) the boundary orientation on $\bdry M'$.


We then set
\begin{gather*}
A(\Gamma'):=(A(\Gamma) \cap M') \cup N(S'_+ \cap R_- (\Gamma ))
\cup N(S'_- \cap R_+ (\Gamma )),\\
R_+ (\Gamma '):=((R_+ (\Gamma )\cap M')\cup S'_+ ) -\op{int} (A(\Gamma')),\\
R_- (\Gamma '):=((R_- (\Gamma )\cap M')\cup S'_- ) -\op{int} (A(\Gamma')),
\end{gather*}
where we smooth and introduce corners as appropriate.

A {\em sutured manifold hierarchy} is a sequence of such decompositions
$$(M,\Gamma)=(M_0 ,\Gamma_0 )\stackrel {S_0}\rightsquigarrow (M_1,\Gamma_1) \stackrel{S_1} \rightsquigarrow\dots\stackrel{S_{n-1}}\rightsquigarrow (M_n,\Gamma_n)$$
along $\pi_1$-injective surfaces $S_i\subset M_i$, $i=1,\dots,n-1$, such that $(M_n,\Gamma_n)$ is a product sutured manifold. Gabai \cite{Ga} showed that:

\begin{thm} \label{thm: Gabai sutured hierarchy}
A taut balanced sutured manifold admits a sutured manifold hierarchy.
\end{thm}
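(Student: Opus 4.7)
The plan is to follow Gabai's original inductive strategy: produce a decomposing surface via Thurston norm theory, verify that the resulting sutured manifold is again taut and balanced, and invoke strict decrease of a well-chosen complexity to iterate until a product is reached. First I would fix a well-founded complexity function $c(M,\Gamma)$ on taut balanced sutured manifolds --- in the style used by Gabai \cite{Ga}, built from $-\chi(R_+(\Gamma))$, the number of suture components, and an auxiliary invariant controlling secondary decompositions --- with two key properties: $c$ strictly decreases under the decompositions I produce, and its minimum value is attained precisely when $(M,\Gamma)$ is a product sutured manifold. The theorem then reduces, by well-founded induction, to the inductive step.

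For the inductive step I assume $(M,\Gamma)$ is taut, balanced, and not already a product, and must produce an admissible decomposing surface $S$ (satisfying the bulleted list preceding the theorem) such that $(M',\Gamma')$ is again taut and balanced with strictly smaller complexity. By Thurston's theory of the norm on $H_2(M,\bdry M;\R)$, any non-trivial class can be represented by a norm-minimizing properly embedded oriented surface. Such an $S$ automatically has no sphere components and no disk components with boundary in $R_\pm(\Gamma)$, and every remaining component is $\pi_1$-injective. Because $(M,\Gamma)$ is not a product, I can choose the class so that $S$ is genuinely non-boundary-parallel; a standard innermost-disk and outermost-arc cleanup then arranges $S \cap A(\Gamma)$ so that arc components are nonseparating in $A(\Gamma)$ and closed components are homologous to $\Gamma$. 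Oriented sum with parallel copies of $R_\pm(\Gamma)$, an operation that does not increase the Thurston norm, supplies the flexibility needed for these combinatorial adjustments.

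The main obstacle --- and the heart of Gabai's theorem --- is to guarantee that the decomposed $(M',\Gamma')$ remains taut. The strategy is to choose $S$ to be \emph{well-groomed}: the arcs of $S \cap A(\Gamma)$ in each component of $A(\Gamma)$ all run in the same direction relative to $\Gamma$. With this condition, a hypothetical norm-reducing surface $T' \subset M'$ can be glued back along $S'_+ \cup S'_-$ to produce a surface $T \subset M$ whose Thurston norm contradicts either the norm-minimality of $S$ in its homology class or the tautness of $R_\pm(\Gamma)$ in $(M,\Gamma)$; this double-sided argument is the technical core. Preservation of the balanced condition follows from $\chi(S'_+) = \chi(S'_-) = \chi(S)$ together with the explicit formulas for $R_\pm(\Gamma')$ in the excerpt, and the surjectivity condition on $\pi_0(A(\Gamma')) \to \pi_0(\bdry M')$ is verified directly using the well-groomed hypothesis.

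Finally, strict decrease of $c$ follows from Gabai's combinatorial bookkeeping for decompositions along non-trivial norm-minimizing classes: either a primary invariant such as $-\chi(R_+)$ strictly drops, or it stays the same while a secondary invariant in $c$ strictly decreases. Well-foundedness of $c$ then forces the induction to terminate in finitely many steps, and at termination $(M_n,\Gamma_n)$ attains the minimum value of $c$ and is therefore a product sutured manifold, completing the hierarchy.
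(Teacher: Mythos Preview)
The paper does not prove this theorem at all: it is stated as a result of Gabai and attributed with the sentence ``Gabai \cite{Ga} showed that:'' immediately preceding the statement, with no proof or sketch given. So there is no ``paper's own proof'' to compare against.

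Your outline is a reasonable high-level sketch of Gabai's original argument from \cite{Ga}, and in that sense it is faithful to what the paper cites. That said, as a self-contained proof it is incomplete in the expected places: the existence of a well-founded complexity function with the two stated properties is asserted rather than constructed (Gabai's actual complexity is considerably more delicate than $-\chi(R_+)$ plus the number of sutures, and the ``auxiliary invariant controlling secondary decompositions'' is doing a lot of unspecified work); the claim that the minimum of $c$ is attained exactly on products is the hardest part and is not argued; and the tautness-preservation step (``glue $T'$ back to get $T$ and derive a contradiction'') is the technical core of Gabai's paper and would need substantially more detail to be a proof rather than a plan. For a survey paper that only cites the result, your level of detail would be more than sufficient as commentary, but it should not be presented as a proof.
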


\subsection{Construction of hypertight contact structures}

Next we discuss the construction step in the contact case.

\begin{defn}
A {\em sutured contact manifold} $(M,\Gamma,U(\Gamma),\xi)$ is a sutured $3$-manifold $(M,\Gamma,U(\Gamma))$ together with a contact form $\lambda$ for $\xi$ such that:
\begin{itemize}
\item[(i)] the Reeb vector field $R_\lambda$ for $\lambda$ is positively transverse to $R_+ (\Gamma)$ and negatively transverse to $R_-(\Gamma)$;
\item[(ii)] $\lambda =Cdt+\beta$ in $U(\Gamma)$, where $\beta$ is independent of $t$. In particular, $R_\lambda =\frac{1}{C} \partial_t$ in $U(\Gamma)$.
\end{itemize}
A contact form $\lambda$ satisfying (i) and (ii) is {\em adapted to $(M,\Gamma,U(\Gamma))$.}
\end{defn}

The initial step in the construction is a {\em product sutured contact manifold}, i.e., a product sutured manifold $(S\times [-1,1], \Gamma=\partial S\times \{0\})$ with contact form $\lambda= dt+ \beta$, where $d\beta$ is an area form on $S$.

The reverse process of a sutured manifold decomposition is called ``sutured manifold gluing".
By ``going back up'' the sutured manifold hierarchy and gluing carefully, we obtain the following:

\begin{thm}[\cite{CH}] \label{thm: hypertight previous version}
If $(M,\Gamma)$ is a taut balanced sutured manifold, then it carries an adapted hypertight contact form.
\end{thm}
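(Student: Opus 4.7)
The plan is to induct on a sutured manifold hierarchy
\[
(M,\Gamma)=(M_0,\Gamma_0)\stackrel{S_0}\rightsquigarrow\dots\stackrel{S_{n-1}}\rightsquigarrow(M_n,\Gamma_n)
\]
furnished by Theorem~\ref{thm: Gabai sutured hierarchy}, equipping each $(M_k,\Gamma_k)$ with an adapted hypertight contact form by working backwards from $k=n$ down to $k=0$. For the base case, the product $(M_n,\Gamma_n)=S_n\times[-1,1]$ carries the tautological adapted form $\lambda_n=dt+\beta_n$ with $d\beta_n$ an area form on $S_n$; its Reeb vector field is $\partial_t$, which has no closed orbits at all and is in particular hypertight.

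For the inductive step, suppose $\lambda_{i+1}$ is an adapted hypertight form on $(M_{i+1},\Gamma_{i+1})$. Reversing the decomposition along the $\pi_1$-injective surface $S_i\subset M_i$ amounts to identifying two subsurfaces $S_i^+\subset R_+(\Gamma_{i+1})$ and $S_i^-\subset R_-(\Gamma_{i+1})$ of $\partial M_{i+1}$. I would first perturb $\lambda_{i+1}$ in collar neighborhoods of $S_i^+\cup S_i^-$ so that each collar takes the product form $C\,dt+\beta$, with the one-forms $\beta$ matching under the identification and with the sutures and corners along $\partial S_i^\pm$ correctly realizing $\Gamma_i$. This is a convex surface problem: arrange $S_i$ to be convex with dividing set prescribed by $\Gamma_i$, then invoke the standard product-neighborhood theorem for convex surfaces. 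The glued contact form $\lambda_i$ is then adapted by construction, and $R_{\lambda_i}$ is positively transverse to the image of $S_i$ inside $M_i$ because it was outward-pointing on $S_i^+$ and inward-pointing on $S_i^-$ coherently.

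The main obstacle is verifying hypertightness of $\lambda_i$, and here I would exploit the structure of the universal cover. Since $S_i$ is $\pi_1$-injective in $M_i$, the preimage of $S_i$ in $\widetilde{M_i}$ consists of properly embedded planes whose complementary regions are lifts of components of $M_{i+1}$, each a copy of the appropriate universal cover and indexed by the vertices of a Bass--Serre tree $T$. Because $R_{\lambda_i}$ is positively transverse to every lift of $S_i$, the lifted Reeb flow induces a monotone motion on $T$: once an orbit traverses an edge it cannot return, and since $T$ is a tree any closed lifted orbit must remain inside a single vertex region. Such an orbit then projects to a contractible closed Reeb orbit of $\lambda_{i+1}$ in $M_{i+1}$, contradicting the inductive hypertightness of $\lambda_{i+1}$. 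The most delicate technical point is choosing the collar perturbation narrow enough that no short contractible orbit is accidentally introduced in the neck region around $S_i$; once this is controlled, iterating $n$ times produces the desired adapted hypertight contact form on the original sutured manifold $(M,\Gamma)$.
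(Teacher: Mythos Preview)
The paper does not itself prove this theorem; it is quoted from \cite{CH} with only the one-line indication that one goes ``back up the sutured manifold hierarchy and glu[es] carefully,'' and your inductive scheme is exactly this strategy. Your universal-cover/Bass--Serre argument for hypertightness is correct and is equivalent to the argument in \cite{CH} (any periodic Reeb orbit meets each $\pi_1$-injective decomposing surface with strictly positive algebraic intersection, hence cannot be null-homotopic); the technical step you flag --- matching the forms on $S_i^\pm$ while keeping $R_\lambda$ transverse to $S_i$ and introducing no new orbits in the neck --- is what \cite{CH} handles with its Flexibility Lemma, not via convex-surface theory as you suggest.
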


In the rest of this subsection we upgrade Theorem~\ref{thm: hypertight previous version} to:

\begin{thm}\label{thm: hypertight}
If $M$ is a closed, oriented, connected, irreducible $3$-manifold with $H_2(M;\Z)\neq 0$, then $M$ carries a hypertight contact structure.
\end{thm}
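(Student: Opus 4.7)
The plan is to reduce to the sutured-manifold result Theorem~\ref{thm: hypertight previous version} by cutting $M$ along a closed surface representing the nontrivial second homology class, constructing a sutured hypertight contact form on the complement, and then regluing contact-geometrically. First I would use Thurston norm theory to select a connected, oriented, norm-minimizing closed surface $S \subset M$ representing a primitive nonzero class $[S] \in H_2(M;\Z)$. Since $M$ is irreducible, $S$ is incompressible and hence $\pi_1$-injective. The cut-open manifold $N := M \setminus \op{int}(N(S))$, equipped with the sutured structure $R_+(\Gamma) = S_+$, $R_-(\Gamma) = S_-$, and $\Gamma = \emptyset$, is then a $\pi_1$-injectively embedded taut sutured $3$-manifold, failing to be balanced only in the trivial sense that its suture is empty.

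Next I would construct a hypertight contact form $\lambda$ on $N$ adapted to this sutured structure by rerunning the sutured-hierarchy-based inductive construction of \cite{CH} that underlies Theorem~\ref{thm: hypertight previous version}. Because $A(\Gamma)$ is empty at the top of the hierarchy, the portions of the argument in \cite{CH} that glue contact forms across annular neighborhoods of sutures become vacuous there and the construction descends to the non-balanced setting; equivalently, one may first attach a small $1$-handle to $(N,\emptyset)$ with a compatible suture to convert it into a balanced sutured manifold, invoke Theorem~\ref{thm: hypertight previous version} directly, and then excise the handle, arranging that no contractible Reeb orbits are introduced in the process.

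I would then reglue along $S$. Near each boundary component $S_\pm$ of $N$, flow-box coordinates for $R_\lambda$ present $\lambda = ds + \beta_\pm(s)$ with $R_\lambda = \bdry_s$ and $d\beta_\pm(s)$ a positive area form on $S$. Extending across a thin collar of $S$ in $M$ by a sufficiently slow interpolation between $\beta_-(0)$ and $\phi^*\beta_+(0)$ (where $\phi : S_- \to S_+$ is the canonical identification inherited from $M$), one obtains a contact form $\hat\lambda$ on $M$ whose Reeb vector field extends $R_\lambda$ and is positively transverse to $S$ throughout. The sufficient-slowness of the interpolation keeps the correction term $\beta_-\wedge\beta_+$ arising in the contact condition subdominant to the convex-combination leading term $(1-t)d\beta_- + t\, d\beta_+$, which is automatically a positive area form.

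Finally, to verify hypertightness of $\hat\lambda$, I would argue by a dichotomy on a closed Reeb orbit $\gamma$. If $\gamma \cap S = \emptyset$, then $\gamma$ lies in $M \setminus S$, which deformation-retracts onto $N$; the $\pi_1$-injectivity of $N \hookrightarrow M$ then forces any contractibility of $\gamma$ in $M$ to imply contractibility in $N$, contradicting the hypertightness of $\lambda$ on $N$. If $\gamma \cap S \neq \emptyset$, then because $R_{\hat\lambda}$ crosses $S$ everywhere with the same sign, the algebraic intersection $[\gamma]\cdot[S] \in \Z$ is strictly positive, so $[\gamma]$ is nonzero in $H_1(M;\Z)$ and $\gamma$ is not even nullhomologous. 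The main expected obstacle is the middle step, extending the \cite{CH} construction to the non-balanced sutured manifold $(N,\emptyset)$; I anticipate this to be a bookkeeping adjustment rather than a substantively new argument, since the non-balancedness arises only in the trivial form of an empty suture, while the delicate parts of \cite{CH} concern contact-structure behavior across \emph{nonempty} suture neighborhoods during the inductive glue-up.
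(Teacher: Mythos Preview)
Your proposal has a genuine and fatal gap at the point you flag as a ``bookkeeping adjustment'': a Reeb vector field can \emph{never} be everywhere transverse to a closed surface. If $R_\lambda$ were positively transverse to the closed surface $S_+$, then $d\lambda|_{S_+}$ would be nondegenerate and hence a positive area form, giving $\int_{S_+} d\lambda > 0$; but Stokes' theorem forces $\int_{S_+} d\lambda = 0$. Equivalently, in your flow-box coordinates $\lambda = ds + \beta_\pm$, the requirement that $d\beta_\pm$ be a positive area form on the closed surface $S$ is impossible. So there is no contact form on $N$ adapted to the sutured structure $(N,\Gamma=\emptyset)$ in the sense required for your regluing step, and the 1-handle trick does not help: after excising the handle you are back to asking for transversality along a closed surface. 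The paper states this explicitly: ``The need to make this modification is a reflection of the fact that a Reeb vector field cannot be made transverse to a closed surface.''

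This is precisely why the paper's argument is substantially more elaborate than a direct application of Theorem~\ref{thm: hypertight previous version}. The paper introduces nonseparating curves $\delta_\pm \subset S_\pm$, connects them by a sequence of disjoint nonseparating curves (Fact~\ref{lemma: curves}), and uses Lemma~\ref{lemma: construction} to build contact forms on thickened cut-open copies $S'_i\times[-\epsilon,\epsilon]$ which are then glued into a fibered neighborhood of a \emph{laminar branched surface} $\B$. The Reeb vector field is transverse to $\B$ rather than to the original closed $S$, and noncontractibility of periodic orbits is then deduced from Gabai--Oertel's theorem on essential laminations (Theorem~\ref{thm: Gabai-Oertel}), not from a simple intersection-number argument with $S$. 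Your dichotomy in the last paragraph would work beautifully if the transversality to $S$ were available, but it is not; the branched-surface machinery is the essential new idea needed to get around this obstruction.
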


Let $S$ be an embedded genus-minimizing surface in a closed $M$ such that $0\not=[S]\in H_2(M;\Z)$. The first step of a sutured hierarchy starting with $(M,\Gamma=\emptyset)$ yields the sutured manifold $(M_1=M\setminus \op{int}(N(S)),\Gamma_1=\emptyset)$ with $R_\pm=S_\pm$, which is taut but not balanced since $\pi_0(A(\Gamma_1))\to \pi_0(\bdry M_1)$ is not surjective.  In order to remedy this we choose nonseparating simple closed curves $\delta_+$ and $\delta_-$ on $R_\pm \subset \bdry M_1$ and take
\begin{gather*}
R_+':= (R_+ - N(\delta_+)) \cup N(\delta_-),\\
R_-':= (R_- - N(\delta_-))\cup N(\delta_+),\\
\Gamma'_1:= \bdry R_+'=\bdry R_-'.
\end{gather*}

The need to make this modification is a reflection of the fact that a Reeb vector field cannot be made transverse to a closed surface.  This creates some difficulties; in particular we risk losing control of the dynamics of the Reeb vector field. We have a way around this and still retain control of the dynamics by making the Reeb vector field transverse to a branched surface.


At this point we recall the following definition; see \cite{Li} for an explanation of the terms:

\begin{defn}[\cite{Li}]
A branched surface is {\em laminar} if the horizontal boundary of fibered neighborhood of the branched surface is incompressible, there is no monogon, there is no Reeb component, and there is no sink disk.
\end{defn}

It was shown in \cite{Li} that a laminar branched surface carries an essential lamination, which can be thought of as a simultaneous generalization of an incompressible surface and a taut foliation.

The key fact about essential laminations that we need is \cite[Theorem~1(d)]{GO}:

\begin{thm}[Gabai-Oertel \cite{GO}] \label{thm: Gabai-Oertel}
Let $\mathcal{L}$ be an essential lamination on $M$. If $\gamma$ is a closed curve which is efficient for $\mathcal{L}$ and $\gamma\cap \mathcal{L}\not=\emptyset$, then $\gamma$ is not contractible in $M$.
\end{thm}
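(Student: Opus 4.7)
The plan is to argue by contradiction in the universal cover $\widetilde{M}$ using two structural facts about essential laminations which are established earlier in \cite{GO} and can be invoked as black boxes here: (a) every leaf of the lifted lamination $\widetilde{\mathcal{L}}$ in $\widetilde{M}$ is a properly embedded plane, and (b) the closure of each complementary region of $\widetilde{\mathcal{L}}$ is irreducible and has incompressible boundary. These follow from the axioms (no sphere leaves, no end-compressible leaves, no Reeb-like behavior) together with a standard Papakyriakopoulos-type argument. Assume $\gamma$ is contractible and $\gamma\pitchfork\mathcal{L}$ with $\gamma\cap\mathcal{L}\neq\emptyset$. Then $\gamma$ lifts to a closed loop $\widetilde{\gamma}\subset\widetilde{M}$ which bounds a singular disk, hence, after general position, a piecewise smooth embedded disk $D$ with $\bdry D=\widetilde{\gamma}$.

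Next I would put $D$ in general position with respect to $\widetilde{\mathcal{L}}$, so that $D\cap\widetilde{\mathcal{L}}$ is a $1$-dimensional sublamination of $D$ whose frontier meets $\bdry D$ transversely at the finitely many points $\widetilde{\gamma}\cap\widetilde{\mathcal{L}}$. The set $D\cap\widetilde{\mathcal{L}}$ decomposes into closed leaves (circles and, possibly, noncompact leaves) and compact arcs with endpoints on $\widetilde{\gamma}$. To simplify, I would first remove all circle leaves by an innermost-disk argument: an innermost circle $c\subset D\cap\widetilde{\mathcal{L}}$ bounds a disk $D_0\subset D$ with interior disjoint from $\widetilde{\mathcal{L}}$; since $c$ lies in a leaf $L$ which is a plane by (a), $c$ also bounds a disk $L_0\subset L$. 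The sphere $D_0\cup L_0$ bounds a ball in $\widetilde{M}$ by (b) (applied in the complementary region containing $\mathrm{int}\,D_0$), and this ball provides an isotopy of $D_0$ across $L_0$ removing $c$ and possibly other intersections in $\mathrm{int}\,L_0$; a standard complexity-reduction terminates this procedure. Noncompact leaves of $D\cap\widetilde{\mathcal{L}}$ with no endpoints on $\bdry D$ would have to accumulate inside $D$, which by transversality and compactness of $D$ can be removed by a small isotopy or handled with the same innermost-disk trick on a limit leaf.

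We are left with $D\cap\widetilde{\mathcal{L}}$ consisting of finitely many disjoint properly embedded arcs in $D$ joining the points of $\widetilde{\gamma}\cap\widetilde{\mathcal{L}}$ in pairs. Since $\widetilde{\gamma}\cap\widetilde{\mathcal{L}}\neq\emptyset$, at least one such arc $\alpha$ exists, and we may take $\alpha$ outermost in $D$. Then $\alpha$ cobounds a bigon $\Delta\subset D$ with a subarc $\beta\subset\widetilde{\gamma}$, and $\mathrm{int}\,\Delta$ is disjoint from $\widetilde{\mathcal{L}}$. Thus $\beta$ lies in a single complementary region $\widetilde{V}$ of $\widetilde{\mathcal{L}}$, and $\Delta$ is a disk in $\widetilde{V}$ with boundary in $\partial\widetilde{V}$ realizing a homotopy rel endpoints of $\beta$ into a single leaf of $\widetilde{\mathcal{L}}$. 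Projecting to $M$, this is precisely a homotopy of a subarc of $\gamma$ rel endpoints in the complementary region so as to reduce $\#(\gamma\cap\mathcal{L})$ (or to make the arc tangent to $\mathcal{L}$), directly contradicting the definition of efficient.

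The main obstacle, and where I would expect to spend the most care, is the simplification step: the intersection $D\cap\widetilde{\mathcal{L}}$ is a general $1$-dimensional lamination, not just a finite $1$-manifold, so the ``innermost'' and ``outermost'' arguments have to be justified via the compactness of $D$ together with a minimality/transfinite-complexity scheme on the lamination. Once the plane-leaf property (a) and the irreducibility/incompressibility statement (b) for complementary regions are granted, the rest of the argument is standard $3$-manifold topology of the Dehn-lemma/tower flavor, and the contradiction with efficiency is essentially built into the definition.
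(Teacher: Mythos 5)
The paper does not prove this statement; it is cited verbatim as \cite[Theorem 1(d)]{GO}, so there is no ``paper's proof'' to compare against. Your sketch does follow the lines of the original Gabai--Oertel argument: lift to $\widetilde{M}$, use that leaves of $\widetilde{\mathcal{L}}$ are planes and complementary regions are irreducible with incompressible boundary, span $\widetilde\gamma$ by a disk, remove circles of intersection, and derive a contradiction with efficiency from an outermost bigon. As long as your inputs (a) and (b) are established before Theorem~1(d) in \cite{GO}---which they are---this is not circular.

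There is, however, one step as written that is wrong and needs to be replaced, not merely polished. You assert that since $\widetilde\gamma$ bounds a singular disk, ``after general position'' it bounds a \emph{piecewise smooth embedded disk} $D$. General position does not upgrade a singular disk to an embedded one; it only normalizes the self-intersections (double arcs, triple points). A closed lift $\widetilde\gamma$ of a contractible loop need not be embedded, and even an embedded $\widetilde\gamma$ need not bound an embedded disk without a loop-theorem/tower argument. The remedy is standard and you should state it: work with a singular map $f\colon D^2\to\widetilde M$ with $f|_{\partial D^2}=\widetilde\gamma$, put $f$ in general position with respect to $\widetilde{\mathcal L}$, and analyze the $1$-dimensional lamination $f^{-1}(\widetilde{\mathcal L})$ on the disk. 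The outermost-bigon step then goes through verbatim: an outermost arc $\alpha$ bounds a half-disk $\Delta$ with $f(\operatorname{int}\Delta)\cap\widetilde{\mathcal L}=\emptyset$, and $f|_\Delta$ is exactly the rel-endpoints homotopy of a complementary subarc of $\gamma$ into a leaf forbidden by efficiency. Note also that your innermost-circle step, as phrased, relies on $D_0\cup L_0$ being an embedded sphere and uses (b) to cap it off; in the singular-disk formulation this needs to be rephrased (e.g.\ surger $f$ across the disk in the plane leaf bounded by $f(c)$, or argue that circle components can be eliminated by a homotopy of $f$). These are repairs in exposition rather than in strategy, and with them your argument is the expected one.

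Two smaller points worth making explicit: you implicitly assume $\gamma$ is transverse to $\mathcal L$, which should either be built into the hypotheses or arranged by a small perturbation compatible with efficiency; and the worry you raise at the end about $D\cap\widetilde{\mathcal L}$ being a genuine lamination (possibly with noncompact leaves spiralling in the interior of $D$) is real, and the ``innermost/outermost'' language has to be interpreted in the laminar sense, exactly as you flag.
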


Here a curve $\gamma$ is {\em efficient for $\mathcal{L}$} if no arc $c$ of $\gamma\cap (M-\mathcal{L})$ (or rather its closure $\overline{c}$) can be pushed into a leaf of $\mathcal{L}$ relative to its endpoints.

As a warm-up we prove the following:

\begin{thm}\label{thm: fibration}
If $M$ is a closed $3$-manifold which admits a fibration $\pi: M^3\to S^1$ with fibers of genus $g\geq 1$, then it carries a hypertight contact structure. When the monodromy is pseudo-Anosov and $g>1$, then the unique tight contact structure $\xi_\pi$ homotopic to the tangent plane of the fibers is hypertight.
\end{thm}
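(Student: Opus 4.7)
The plan is to produce a hypertight contact form on $M$ via the Eliashberg--Thurston approximation of the fibration foliation, and then identify the resulting contact structure with $\xi_\pi$ via Theorem~\ref{thm: unique tight for fibered} in the pseudo-Anosov case. Write $M$ as the mapping torus $\Sigma_g\times[0,1]/(x,1)\sim(\phi(x),0)$, with $t$ the $[0,1]$-coordinate. The foliation $\mathcal{F}_\pi$ by fibers is taut: a periodic point of $\phi$ (which exists in all cases --- by Nielsen--Thurston in the pseudo-Anosov case, by Anosov dynamics for hyperbolic $T^2$-bundles, trivially otherwise) suspends to a closed transversal meeting every fiber. By Theorem~\ref{thm: eliashberg-thurston} one obtains a positive contact structure $\xi$ on $M$ whose underlying plane field is $C^0$-close to $T\mathcal{F}_\pi=\ker dt$.

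The hypertightness of $\xi$ comes from analyzing the Reeb vector field $R_\lambda$ of any adapted contact form $\lambda$. Since $\xi$ is $C^0$-close to $\ker dt$, the line field transverse to $\xi$ is $C^0$-close to the line field $\R\partial_t$; as $R_\lambda$ is a nowhere-zero section of the former, fixing coorientations gives $dt(R_\lambda)>0$ pointwise on $M$. Any closed orbit $\gamma$ of $R_\lambda$ then satisfies $\int_\gamma dt>0$ and has nonzero image in $\pi_1(S^1)=\Z$; by the surjectivity of $\pi_*\colon \pi_1(M)\to\pi_1(S^1)$ coming from the fibration, $[\gamma]\neq 0$ in $\pi_1(M)$. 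Thus $\xi$ is hypertight, establishing the first statement.

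For the second statement, assume $g>1$ and $\phi$ pseudo-Anosov. The $\xi$ constructed above is tight (by Hofer's theorem, hypertightness implies universal tightness) and is homotopic to $T\mathcal{F}_\pi$ as a plane field, since it lies in a $C^0$-neighborhood. Theorem~\ref{thm: unique tight for fibered} identifies $\xi_\pi$ as the unique tight contact structure in this homotopy class, so $\xi$ is isotopic to $\xi_\pi$ and hence $\xi_\pi$ is hypertight. (Alternatively, since pseudo-Anosov monodromies give foliations with no torus, plane, or cylinder leaves, Theorem~\ref{thm: uniqueness} of Vogel applies and identifies $\xi$ with $\xi_\pi$ directly.)

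The delicate point is ensuring the transversality $dt(R_\lambda)>0$, which would be automatic if Theorem~\ref{thm: eliashberg-thurston} is applied so as to produce a uniformly $C^0$-small perturbation. A more concrete alternative is to write $\lambda=Cdt+\beta_t$ with $\beta_t$ interpolating between a 1-form $\beta$ on $\Sigma$ and $\phi^*\beta$, reducing the contact condition to the pointwise positivity of $Cd_\Sigma\beta_t-\beta_t\wedge\partial_t\beta_t$. Since $d_\Sigma\beta_t$ cannot be globally positive on a closed surface, one must arrange $\beta$ and the interpolation so that the $-\beta_t\wedge\partial_t\beta_t$ term compensates at the zeros of $d\beta_t$ --- essentially the content of Eliashberg--Thurston in the fibration setting, which can be made particularly explicit using the stable/unstable measured foliations when $\phi$ is pseudo-Anosov, providing the ``convenient Reeb vector field'' alluded to in Problem~\ref{pb: fibration}.
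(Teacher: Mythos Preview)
Your argument has a fatal gap at its central step: the claim that $dt(R_\lambda)>0$ pointwise is impossible for \emph{any} contact form on $M$, no matter how $C^0$-close $\xi$ is to $\ker dt$. If $R_\lambda$ were positively transverse to a closed fiber $S$, then for a positive basis $v_1,v_2$ of $T_pS$ one computes $(\lambda\wedge d\lambda)(v_1,v_2,R_\lambda)=d\lambda(v_1,v_2)$ using $\iota_{R_\lambda}d\lambda=0$ and $\lambda(R_\lambda)=1$, so $d\lambda|_S$ would be a positive area form; but $\int_S d\lambda=0$ by Stokes. The paper states this explicitly: ``a Reeb vector field cannot be made transverse to a closed surface.'' The mistake is in the sentence ``the line field transverse to $\xi$ is $C^0$-close to $\R\partial_t$'': a $2$-plane field has no canonical transverse line field, and the Reeb direction depends on the contact form, not just on $\xi$. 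Eliashberg--Thurston controls $\xi$ as a plane field --- giving universal tightness and semi-fillability --- but says nothing about Reeb dynamics and hence nothing about hypertightness. Your final paragraph implicitly concedes this: once the $-\beta_t\wedge\partial_t\beta_t$ term is needed for the contact condition, $R_\lambda$ acquires a fiberwise component and the transversality argument collapses; you do not supply a replacement mechanism for ruling out contractible orbits.

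The paper's proof is accordingly quite different. For $g\geq 2$ it abandons transversality to the fibers and instead constructs (via Lemma~\ref{lemma: construction} on $T\times[-1,1]$ for a bounded surface $T$, together with the connectedness of the curve complex) a contact form whose Reeb vector field is transverse to a \emph{laminar branched surface} $\B$, assembled from copies of the fiber with annular neighborhoods of nonseparating curves removed and reglued. Every periodic Reeb orbit then meets the essential lamination carried by $\B$ efficiently, and Gabai--Oertel (Theorem~\ref{thm: Gabai-Oertel}) forces noncontractibility. The identification with $\xi_\pi$ via the Euler class and Theorem~\ref{thm: unique tight for fibered} then proceeds as you propose. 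For $g=1$ the Reeb vector field is taken \emph{tangent} to the torus fibers (Giroux's observation), again not transverse.
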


\begin{proof}
The case of $g=1$ is straightforward and was observed by Giroux: there exists a contact form such that the characteristic foliation on each torus fiber is linear and the Reeb vector field is tangent to the fibers and ``orthogonal" to the characteristic foliation.

We consider the $g\geq 2$ case. We show that $\xi_\pi$ has a contact $1$-form whose Reeb vector field is transverse to a laminar branched surface $\B$ and has no contractible periodic orbit in $M\setminus N(\B)$.

\begin{lemma}\label{lemma: construction}
Let $T$ be a compact, connected, oriented surface of genus $>1$ with two boundary components $\gamma_1$ and $\gamma_2$ and let $\delta_-,\delta_+ \subset T$ be two nonseparating oriented simple closed curves.  Then there exists a contact form $\alpha$ on $T\times [-1,1]$ with the following properties:
\begin{enumerate}
\item The Reeb vector field $R$ of $\alpha$ is positively transverse to $T\times \{ t\}$, $t\in [-1,1]$, and is tangent to the interval fibers of $(\partial T)\times [-1,1]$.
\item $\xi =\ker \alpha$ is positively transverse to $\gamma_i \times \{ t\}$ for $t\in [-1,1]$ and $i=1,2$, and to $\delta_- \times \{ -1\}$ and $\delta_+ \times \{ 1\}$.
\item $\int_{\gamma_1 \times \{ t\}} \alpha =\int_{\gamma_2 \times \{ t\}} \alpha =\int_{\delta_- \times \{ -1\}} \alpha =\int_{\delta_+ \times \{ 1\}} \alpha =1$.
\end{enumerate}
\end{lemma}

\begin{proof}[Proof of Lemma~\ref{lemma: construction}]
The contact structure will be a modification of the $[-1,1]$-invariant contact structure on $T\times[-1,1]$.
By the Flexibility Lemma \cite[Lemma~5.1]{CH}, it suffices to construct a contact form $\alpha$ near $T\times \{ \pm 1\}$ that satisfies the properties of Lemma~\ref{lemma: construction}.

We perform this construction near $T=T\times\{1\}$ for an arbitrary nonseparating simple closed curve $\delta=\delta_+$; situation for $T\times\{-1\}$ and $\delta_-$ is analogous. As all nonseparating simple closed curves in $T$ are diffeomorphic by a diffeomorphism of $T$ which is the identity near $\bdry T$, it suffices to do the construction for some $\delta$.

Let $\F$ be an oriented singular Morse-Smale foliation on $T$ such that:
\begin{itemize}
\item its singularities are all positive;
\item $\F$ has no periodic orbits; and
\item $\F$ exits from $T$ along its boundary.
\end{itemize}
Then there exists a $1$-form $\lambda$ such that $\F=\ker \lambda$ and $d\lambda$ is an area form on $T$.
The form $dt+\lambda$ on $T\times \R_t$ is contact, its Reeb vector field is $\partial_t$ and the characteristic foliation on $T=T\times \{0\}$ agrees with $\F$. By attaching a cylindrical collar $\bdry T\times[0,1]$ to $T$ along $\bdry T=\bdry T\times\{0\}$ and extending $\lambda$, we can increase the $\lambda$-lengths of each component of the boundary at will and thus assume that they are both equal to a certain value $\kappa\gg 0$.

There is quite a bit of flexibility to choose $\F$; in particular we may choose $\F$ so that $\F$ is transverse to a (necessarily orientable) train track $\mathcal{T}=c_0\cup c_1$, where $c_0$ is a smooth homologically nontrivial simple closed curve and $c_1$ is an arc with both endpoints on $c_0$, approaching $c_0$ from opposite sides. The train track $\mathcal{T}$ carries infinitely homologically nontrivial simple closed curves $\zeta_1,\zeta_2,\dots$; they are all transverse to $\F$ and some $\zeta_j$ has $\lambda$-length greater than $\kappa$.  Now we can enlarge $T$ by attaching another collar so that the $\lambda$-length of each boundary component agrees with the $\lambda$-length of $\delta:=\zeta_j$.  The normalization (3) is obtained by rescaling.
\end{proof}

We state without proof a classical result on the connectedness of the curve complex:

\begin{fact}\label{lemma: curves}
For every pair of nonseparating simple closed curves $\delta_-,\delta_+$ on a closed oriented surface $S$, there exists a sequence of nonseparating simple closed curves $\delta_0,\dots,\delta_n$ such that $\delta_i \cap \delta_{i+1} =\emptyset$, $\delta_0 =\delta_-$, and $\delta_n=\delta_+$.
\end{fact}

Now let $S$ be a fiber of the fibration $\pi:M\to S^1$ and let $\phi: S\stackrel\sim\to S$ be the monodromy. Let $\delta_0,\dots,\delta_n$ be the sequence of nonseparating oriented simple closed curves on $S$ given by Fact~\ref{lemma: curves} for $\delta_0=\phi(\delta_+)$ and $\delta_n=\delta_+$. We consider the surfaces $S_i =S\times \{ \tfrac{i}{n}\}$, $i=0,\dots,n$, in $S\times [0,1]$.

We briefly comment on orientations of $\delta_i$:  Choose an orientation for $\delta_n=\delta_+$.  Then the orientation for $\delta_0=\phi(\delta_+)$ is determined.  Choose orientations on $\delta_1,\dots,\delta_{n-1}$ arbitrarily.

Let $S_i' =S_i \setminus U_i$, where $U_i=N(\delta_i \times \{ \tfrac{i}{n}\}) \subset S_i$, and let $\gamma_i^1$ and $\gamma_i^2$ be the two components of $\partial S_i'$. Suppose the orientation on $\gamma_i^1$ agrees with that of $\delta_i$.

We apply Lemma \ref{lemma: construction} to a small neighborhood $S_i' \times [-\epsilon,\epsilon]$ of $S_i'$ to obtain a contact form
$\alpha_i$ on $S_i' \times [-\epsilon,\epsilon]$, for which the curves $\delta_{i-1}\times\{-\epsilon\}$ in $S_i' \times \{-\epsilon\}$ and $-\delta_{i+1}\times\{\epsilon\}$ (note the orientation reversal) in $S_i' \times \{\epsilon\}$ are $\alpha_i$-positively transverse and have $\alpha_i$-length $1$, and $\gamma_i^1\times\{\pm\epsilon\}$ and $\gamma_i^2\times \{ \pm \epsilon\}$ have $\alpha_i$-length $1$.

We then identify a neighborhood of $\gamma_i^2 \times \{-\epsilon\}\subset S_i'\times\{-\epsilon\}$ with a neighborhood of $-\delta_i \times \{\epsilon\}\subset S_{i-1}' \times \{\epsilon\}$ and a neighborhood of $\gamma_i^1 \times \{ \epsilon\}\subset S'_i\times\{\epsilon\}$ with a neighborhood of $\delta_i \times \{ -\epsilon\} \subset S_{i+1}' \times \{ -\epsilon\}$, in a manner compatible with the forms $\alpha_i$.

After smoothing we obtain a fibered neighborhood $N(\B)\subset M$ of a branched surface $\B$ whose vertical fibers are tangent to the Reeb vector field $R$ of a contact form $\alpha$. One easily verifies that $\B$ is a laminar branched surface (for example, sink disks do not exist by our choice of curves $\delta_i$).

It remains to extend $\alpha$ to the exterior of $N(\B)$. By construction, $M\setminus \op{int} (N(\B ))$ is diffeomorphic to $\Sigma \times [0,1]$, where $\Sigma$ is a compact oriented surface with boundary. The Reeb vector field is positively transverse to $\Sigma \times \{ 0,1\}$ for a suitable orientation of $\Sigma$ and tangent to $(\partial \Sigma )\times [0,1]$.  Hence, using the Flexibility Lemma, we can extend the contact form $\alpha$ and the Reeb vector field $R$ to $\Sigma_0 \times [0,1]$ so that the Reeb vector field gives a fibration by intervals. Let us write $\xi=\ker \alpha$.




\begin{claim}
The Reeb field $R$ has no contractible periodic orbit, i.e., $\xi$ is hypertight.
\end{claim}

\begin{proof}
Every periodic orbit $\zeta$ of $R$ must intersect $\B$. Since $\B$ is laminar, $\zeta$ is also transverse to an essential lamination $\mathcal{L}$ carried by $\B$.  The curve $\zeta$ is efficient with respect to $\mathcal{L}$, for example since $\mathcal{L}$ is orientable.  Theorem~\ref{thm: Gabai-Oertel} then implies that $\zeta$ is not contractible.
\end{proof}


One easily checks that $\langle e(\xi),[F]\rangle =-\chi (F)$. Since $\xi$ is hypertight by the claim, it is tight, and is therefore contact isotopic to $\xi_\pi$ by Theorem~\ref{thm: unique tight for fibered}.  This completes the proof of Theorem~\ref{thm: fibration}.
\end{proof}

This strategy extends to control the dynamics in the last gluing of a sutured manifold hierarchy in the general case:

\begin{proof}[Proof of Theorem~\ref{thm: hypertight}]
Let $S$ be an embedded genus-minimizing surface in $M$ with $[S]\not=0$. We first cut $M$ along $S$ to obtain $(M_1,\Gamma_1)$ with boundary $R_+(\Gamma_1)=S_+$ and $R_-(\Gamma_1)=S_-$ and $\Gamma_1=\emptyset$. Next we cut $M_1$ along the surface $S_1\subset M_1$, with $\partial S_1$ intersecting $S_+$ and $S_-$ along multicurves $\delta_+$ and $\delta_-$, respectively, to obtain $(M_2,\Gamma_2)$ such that $A(\Gamma_2)$ corresponds to thickenings of $\delta_\pm$.  We may take each of $\delta_+$ and $\delta_-$ to be a union of parallel oriented nonseparating simple closed curves, since Gabai's construction of the sutured manifold hierarchy guarantees such ``well-grooming" of  $\delta_\pm=S_1\cap S_\pm$.  Theorem~\ref{thm: hypertight previous version} constructs a hypertight contact form on $(M_2,\Gamma_2)$.

The rest of the proof is similar to that of Theorem~\ref{thm: fibration}:  We take a sequence $\delta_0,\dots,\delta_n$ of curves in $S$ from $\delta_0=\delta_+$ to $\delta_n=\delta_-$  furnished by Fact~\ref{lemma: curves}; here $\delta_+$ and $\delta_-$ are unions of parallel curves and each $\delta_i$, $i=1,\dots,n-1$ is connected.  Following the proof of Theorem~\ref{thm: fibration}, we construct contact forms $\alpha_i$ on $S_i'\times[-\epsilon,\epsilon]$ for $i=1,\dots,n-1$ and glue the pieces $S_i'\times[-\epsilon,\epsilon]$, $i=1,\dots,n-1$, in the same way.  We also glue $M_2$ and $S_1'\times[-\epsilon,\epsilon]$ so that a neighborhood of $\delta_0\subset R_+(\Gamma_2)$ is identified with a neighborhood of $\delta_0\times\{-\epsilon\}\subset S'_1\times\{-\epsilon\}$ and a neighborhood of $\gamma_1^2\times\{-\epsilon\}\subset S_1'\times\{-\epsilon\}$ is identified with a neighborhood of $-\delta_1\subset R_+(\Gamma_2)$ and analogously glue $M_2$ and $S_{n-1}'\times[-\epsilon,\epsilon]$. Note that since $\delta_+$ (also $\delta_-$) is not necessarily connected we need to make the appropriate adjustments to Lemma~\ref{lemma: construction}(3).  We then extend the contact form to the exterior of $M_2\cup (\cup_{i=1}^{n-1} S_i'\times[-\epsilon,\epsilon])/\sim$, which is of the form $\Sigma\times[0,1]$.  All the periodic orbits must efficiently intersect a laminar branched surface and hence are not contractible by Theorem~\ref{thm: Gabai-Oertel}.
\end{proof}

\begin{pb}\label{pb: Haken}
Does every Haken manifold carry a tight contact structure?
\end{pb}

\begin{pb}\label{pb: hypertight vs universally tight}
Is every universally tight contact structure on a $3$-manifold with universal cover $\R^3$ hypertight?
\end{pb}

\section{Invariants of contact manifolds}\label{section: invariant}

\subsection{Sutured contact homology and sutured ECH}

For more details see~\cite{EGH}, \cite{Hu1,Hu2,Hu3}, \cite{CGHH} for contact homology, ECH, and sutured versions of these theories, respectively.

There are two main invariants of a contact manifold $(M,\xi)$, obtained as the homology of chain complexes generated by finite products $\bs\gamma=\prod_i \gamma_i$ of periodic orbits of a Reeb vector field $R_\lambda$, $\xi=\ker \lambda$, subject to some conditions and whose differentials count (Fredholm or ECH) index $1$ $J$-holomorphic curves that limit to cylinders over collections of periodic orbits at the $s\to\pm\infty$ ends in the symplectization $(\R_s\times M,d(e^s \lambda))$ for some ``adapted'' almost complex structure $J$, modulo translation in the $s$-direction. In this text we use $\mathbb{F}=\Z/2$-coefficients.

The first invariant {\em contact homology} is sensitive to the particular choice of contact structure. In the case of a hypertight contact form $\lambda$, there is a simpler version where the chain complex can be taken to be the free $\mathbb{F}$-vector space generated by the (good) periodic orbits of $R_\lambda$ and the differential to be a count of $J$-holomorphic cylinders; this yields the {\em cylindrical contact homology of $(M,\lambda)$}.

The second invariant {\em embedded contact homology} (ECH) is a topological invariant of the ambient manifold $M$ and is isomorphic to the Heegaard Floer homology of $-M$, i.e., $M$ with the orientation reversed. In particular, the contact class $c(M,\xi)$, associated to the contact structure $(M,\xi)$ in the Heegaard Floer hat group $\widehat{HF}(-M)$, is defined in the ECH hat group $\widehat{ECH}(M)$ to be the homology class of the empty set of orbits.

In the case of a sutured contact manifold $(M,\Gamma,\xi=\ker \lambda)$, the hypothesis that $\lambda$ is a contact form adapted to $(M,\Gamma)$, together with the choice of a {\em tailored} almost complex structure $J$ on the symplectization from \cite{CGHH}, prevents a sequence of holomorphic curves from exiting the symplectization $\R\times M$ along its boundary $\R\times \bdry M$ and allows us to extend the definition of contact homology and ECH to sutured manifolds \cite{CGHH}. The resulting groups are the {\em sutured contact homology} and {\em sutured ECH} groups and are denoted by $HC(M,\Gamma,\xi)$ and $ECH(M,\Gamma,\xi)$.

The following gluing theorem holds for both:

\begin{thm}[\cite{CGHH}]\label{thm: injection}
If $(M',\Gamma',\xi')$ is obtained from $(M,\Gamma ,\xi)$ by a sutured gluing, then there are injections
\begin{gather*}
ECH(M,\Gamma,\xi)\hookrightarrow ECH(M',\Gamma',\xi'),\\
HC(M,\Gamma,\xi)\hookrightarrow HC(M',\Gamma',\xi').
\end{gather*}
\end{thm}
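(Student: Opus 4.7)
The plan is to construct the two injections at the chain level as the inclusion sending an orbit set contained in $M$ to the same orbit set viewed inside $M'\supset M$, then verify it is a chain map with a retraction, so that passing to homology yields an injection.

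\textbf{Setup of contact data.} Choose a contact form $\lambda$ adapted to $(M,\Gamma,U(\Gamma))$. By the description of a sutured gluing, $M'$ is obtained from $M$ by attaching a ``collar'' modeled on $P\times[0,1]$ along a subsurface of $R_+(\Gamma)\cup R_-(\Gamma)$, with $\Gamma'$ and $R_\pm(\Gamma')$ redefined accordingly. Extend $\lambda$ to a contact form $\lambda'$ on $M'$ adapted to $(M',\Gamma')$ by putting the product form $\lambda' = C\,dt+\beta$ on this collar, so that $R_{\lambda'}$ is proportional to $\partial_t$ there and flows from $R_-(\Gamma)$ to $R_+(\Gamma)$ through the collar. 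Every closed Reeb orbit of $R_{\lambda'}$ is then either entirely contained in $M\subset M'$, where it coincides with an orbit of $R_\lambda$, or it crosses the collar; by taking $C$ large one can arrange that any orbit of the second type has action greater than any prescribed $L>0$.

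\textbf{Chain map and retraction.} Choose a tailored almost complex structure $J$ on $\R\times M$ and extend it to a tailored $J'$ on $\R\times M'$ agreeing with $J$ on $\R\times M$. The essential input, established in \cite{CGHH}, is a ``no-escape'' lemma: any $J'$-holomorphic curve in $\R\times M'$ whose positive and negative asymptotic limits are contained in $M$ is itself contained in $\R\times M$. Granting this, the inclusion $\iota$ of orbit sets supported in $M$ into orbit sets in $M'$ is a well-defined chain map for both ECH and cylindrical contact homology. The same no-escape property shows that the span of such orbit sets is closed under the differential of the larger complex, so it is a subcomplex, and the projection $\pi$ onto this span is a chain retraction with $\pi\circ\iota=\op{id}$. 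Hence $\iota_*$ is injective. For ECH one may, if convenient, pass first to the action-filtered theories $ECH^{\leq L}$, in which orbits from the collar are absent by the action bound above, and recover the full group as the direct limit $L\to\infty$.

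\textbf{Main obstacle.} The technical heart is the no-escape lemma for the tailored $J'$. The hypersurfaces separating $M$ from the collar inside $\R\times M'$ must be arranged to be weakly $J'$-pseudoconvex from the outside, via the product form of $\lambda'$ on the collar and a matching choice of $J'$; a maximum principle applied to a coordinate transverse to such a hypersurface then forbids interior extrema for a holomorphic curve and forces it to remain in $\R\times M$. A parallel pseudoconvexity argument at the vertical annular neighborhood $A(\Gamma')$ is needed to prevent curves from escaping along $\R\times\bdry M'$. Once this lemma is in place the remainder of the proof is formal.
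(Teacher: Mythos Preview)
The paper does not prove this statement --- it is quoted from \cite{CGHH} --- so there is nothing here to compare your argument against directly. That said, your strategy of realizing the chain complex of $(M,\Gamma,\xi)$ as a chain-level direct summand of that of $(M',\Gamma',\xi')$ is the right one, but the justification has a genuine gap. The no-escape lemma, as you state it, says that a $J'$-curve with \emph{both} ends on $M$-orbits lies in $\R\times M$. This identifies the diagonal block of $\partial_{M'}$ (from $M$-orbit sets to $M$-orbit sets) with $\partial_M$, but says nothing about the off-diagonal blocks. For $\iota$ to be a chain map you must also exclude curves from an $M$-orbit set to an orbit set containing a new orbit; for $\pi$ to be a chain retraction you must exclude curves going the other way. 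Neither follows from no-escape. Your action workaround does not help either: orbits in $M$ can have arbitrarily large action, so no single $C$ places all new orbits above all old ones, and letting $C$ depend on $L$ changes the contact form on $M'$, so the direct limit you form is not obviously $ECH(M',\xi')$.

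The missing ingredient is a homological conservation law rather than a further pseudoconvexity statement. The gluing surface $S\subset M'$ (the common image of $S_+\sim_\phi S_-$) is Poincar\'e dual to a closed $1$-form $\eta$ on the completion with $\int_\gamma\eta=\gamma\cdot S$ for closed orbits $\gamma$. Pulling $\eta$ back along any finite-energy $J'$-curve and applying Stokes shows that the total $S$-intersection of the positive asymptotics equals that of the negative asymptotics. Since $M$-orbits have $\gamma\cdot S=0$ while every new orbit has $\gamma\cdot S>0$, the chain complex of $M'$ splits as a direct sum over this integer; the no-escape lemma then identifies the degree-$0$ summand with the chain complex of $(M,\Gamma,\xi)$, yielding both $\iota$ and $\pi$ at once. (The paper itself invokes exactly this type of splitting later, over Nielsen classes, in the proof of Theorem~\ref{thm: cylindrical}.)
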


Let $M$ be a closed manifold and $K\subset M$ be a knot. We define the {\em knot ECH} of $(M,K)$ as follows: Let $N(K)$ be a small tubular neighborhood of $K$, $\Gamma$ two parallel meridian sutures on $\bdry N(K)$, and $\xi$ a contact structure adapted to $(M\setminus\op{int}(N(K)),\Gamma)$.  Then the knot ECH is
$$ECH(M\setminus\op{int}(N(K)),\Gamma,\xi),$$
defined in analogy with Juh\'asz' observation that the knot Heegaard Floer homology group $\widehat{HFK}(M,K)$ satisfies:
$$\widehat{HFK}(M,K)\simeq HF(M\setminus\op{int}(N(K)),\Gamma).$$
Here we denote the sutured Heegaard Floer homology of a sutured manifold $\mathcal{S}$ (in a slightly nonstandard manner) by $HF(\mathcal{S})$.

Let $(M,\xi)$ be a closed contact manifold and $\Lambda\subset(M,\xi)$ a Legendrian knot. We define the contact homology Legendrian invariant of $\Lambda$ to be
$$HC(M\setminus \op{int}(N(\Lambda)),\Gamma_\Lambda,\xi),$$
where $N(\Lambda)$ is the standard tubular neighborhood of $\Lambda$ and $\Gamma_\Lambda$ has two parallel sutures whose slope is induced by the framing along $\Lambda$ given by $\xi$.

\begin{pb}
Compute this Legendrian knot invariant on significant examples.
\end{pb}

\subsection{ECH and Heegaard Floer homology for a sutured manifold}

In work in progress with Ghiggini and Spano \cite{CGHS}, we study the extension of the isomorphism between ECH and Heegaard Floer homology from~\cite{CGH0, CGH1,CGH2,CGH3} to sutured manifolds.

The ECH group $ECH(M,\Gamma,\xi)$ admits a decomposition into homology classes $A\in H_1(M;\Z)$ of ``orbit sets'' $\bs\gamma$ as follows:
$$ECH(M,\Gamma,\xi)=\oplus_{A\in H_1(M;\Z)} ECH(M,\Gamma,\xi,A).$$
The sutured Heegaard Floer homology group $HF(M,\Gamma)$ admits a decomposition into relative Spin$^c$-structures which form an affine space over $H^2(M,\bdry M;\Z)$:
$$HF(M,\Gamma)=\oplus_{{\frak s}\in H^2(M,\bdry M;\Z)} HF(M,\Gamma,{\frak s}).$$

We expect to prove the following:

\begin{conj}\label{thm: sutured manifold}
If $(M,\Gamma,\xi =\ker \alpha )$ is a sutured contact manifold, then
$$ECH(M,\Gamma,\xi,A)\simeq HF(-M,-\Gamma,{\frak s}_\xi +PD(A)).$$
\end{conj}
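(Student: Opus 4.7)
The plan is to extend the closed-manifold isomorphism $ECH(M)\simeq\widehat{HF}(-M)$ of the Colin-Ghiggini-Honda series \cite{CGH0}-\cite{CGH3} to the sutured category, while keeping track of the grading by homology classes / relative Spin$^c$-structures. The architecture I would use has three pieces: an adapted partial open book decomposition of $(M,\Gamma,\xi)$; an intermediate periodic Floer homology for the monodromy that matches both sides at the chain level; and a careful analysis of generators and holomorphic curves near $\bdry M$.

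The first step is to realize $(M,\Gamma,\xi)$ by a partial open book $(S,\phi)$ in the sense of Honda-Kazez-Mati\'c, and to choose an adapted contact form $\alpha$ whose Reeb vector field $R_\alpha$ is tangent to the binding, positively transverse to the pages in the mapping torus region, and equals $\tfrac{1}{C}\partial_t$ in the collar $U(\Gamma)$. After stretching in the fibered direction and perturbing the monodromy as in \cite{CGH2,CGH3}, the periodic orbits of $R_\alpha$ are in bijection with periodic points of $\phi$ in the interior of $S$, and a tailored $J$ as in \cite{CGHH} confines holomorphic curves to $\R\times M$. The same partial open book furnishes a sutured Heegaard diagram for $-M$ whose Lagrangian Floer complex in $\op{Sym}^k(S)$, via the Lipshitz cylindrical reformulation, has the same underlying generators as $R_\alpha$-orbit sets. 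The objective at this stage is to define a common intermediate periodic Floer complex $PFH(\phi;\bdry)$ relative to the boundary and to identify both the sutured ECH chain complex and the sutured Heegaard Floer chain complex of $-M$ with it, following the neck-stretching strategy of \cite{CGH1,CGH2,CGH3}.

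The matching of gradings should go as follows: an orbit set $\bs\gamma$ with $[\bs\gamma]=A\in H_1(M;\Z)$ is sent, via Poincar\'e-Lefschetz duality $H_1(M;\Z)\simeq H^2(M,\bdry M;\Z)$ and the assignment of ${\frak s}_\xi$ to the empty orbit set (pinned down by a relative first Chern class computation against the plane field $\xi$ together with the transverse behavior of $R_\alpha$ along $R_\pm(\Gamma)$), to ${\frak s}_\xi+PD(A)$. The hard part will be the simultaneous boundary analysis on both sides: one must prevent holomorphic curves from escaping along $\R\times\bdry M$ or developing unwanted breaking along $\R\times A(\Gamma)$; ensure that the sutured Heegaard diagram is admissible and free of boundary-parallel spurious holomorphic disks; and verify that the identification of $ECH$-index-one curves with Heegaard Floer disks from \cite{CGH2,CGH3} continues to commute with sutured truncation in the presence of the corners of $M$ and the partial open book. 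Once these boundary issues are resolved, a direct-limit argument over increasingly stretched partial open books, in the spirit of the closed case, should assemble the chain-level identification into an isomorphism of homology on each Spin$^c$ class, with the Spin$^c$ shift absorbed into the basepoint choice ${\frak s}_\xi$.
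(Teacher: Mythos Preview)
The statement you are attempting to prove is labeled a \emph{Conjecture} in the paper, not a theorem: the authors write ``We expect to prove the following'' before it, cite it to work in progress \cite{CGHS}, and explicitly put its consequences in scare-quoted ``corollaries''. There is therefore no proof in the paper to compare your proposal against.

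That said, your outline is broadly aligned with the strategy the paper points to --- namely, extending the closed-manifold argument of \cite{CGH0}--\cite{CGH3} to the sutured setting via an intermediate periodic/open-book model --- so you have correctly identified the intended architecture. But you should be clear that what you have written is a plan, not a proof: the ``hard part'' you flag (boundary escape, admissibility, compatibility of the ECH-index-one/Heegaard-disk identification with the sutured truncation and corners) is precisely the content the authors had not completed at the time of writing. Your sketch does not supply any mechanism to resolve these issues; it only names them. In particular, the neck-stretching and direct-limit arguments of \cite{CGH1}--\cite{CGH3} rely on delicate compactness and gluing statements whose sutured analogues are not available in the literature you cite, and asserting that they ``should'' go through is not a proof step. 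If you wish to present this as more than a heuristic, you would need to either carry out that boundary analysis or cite a source where it is done.
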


{\em In the rest of this subsection we will refer to consequences of the conjecture and its expected proof as ``corollaries".}

\subsubsection{Case of a knot}

The sutured isomorphism specializes to an isomorphism between knot ECH and knot Heegaard Floer homology.

\begin{cor}\label{thm: general knot}
Let $M$ be a closed manifold, $K\subset M$ a knot, $N(K)$ its tubular neighborhood, and $\Gamma$ two parallel meridian sutures on $\bdry N(K)$. Also let $\xi$ be a contact structure adapted to $(M\setminus\op{int}(N(K)),\Gamma)$. Then
\begin{align*}
ECH(M\setminus\op{int}(N(K)),\Gamma,\xi)& \simeq HF(-(M\setminus \op{int}(N(K))),-\Gamma)\\
& \simeq \widehat{HFK}(-M,-K).
\end{align*}
\end{cor}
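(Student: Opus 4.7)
The plan is to deduce the corollary directly from Conjecture~\ref{thm: sutured manifold} together with Juh\'asz's identification of $\widehat{HFK}$ with the sutured Heegaard Floer homology of the knot exterior with two meridian sutures, as already recalled in the excerpt. The relevant sutured manifold is $\mathcal{S}=(M\setminus\op{int}(N(K)),\Gamma)$: the boundary torus $\bdry N(K)$ is divided by the two parallel meridian sutures of $\Gamma$ into two annuli $R_\pm(\Gamma)$, so $\mathcal{S}$ is balanced, and by hypothesis it carries the adapted contact structure $\xi$. No further geometric input is required.

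First I would apply Conjecture~\ref{thm: sutured manifold} to $(\mathcal{S},\xi)$ to get, for every class $A\in H_1(M\setminus\op{int}(N(K));\Z)$, the refined isomorphism
$$ECH(\mathcal{S},\xi,A)\simeq HF(-\mathcal{S},{\frak s}_\xi+PD(A)).$$
By Poincar\'e--Lefschetz duality, $H_1(M\setminus\op{int}(N(K));\Z)\cong H^2(M\setminus\op{int}(N(K)),\bdry;\Z)$, so as $A$ varies the translates ${\frak s}_\xi+PD(A)$ exhaust the affine space of relative Spin$^c$-structures. Summing the refined isomorphism over all $A$ therefore yields the unrefined version
$$ECH(\mathcal{S},\xi)\simeq HF(-\mathcal{S}),$$
which is the first displayed isomorphism of the corollary.

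For the second isomorphism I would invoke Juh\'asz's formula $\widehat{HFK}(M',K')\simeq HF(M'\setminus\op{int}(N(K')),\Gamma)$ with $(M',K')=(-M,-K)$ and the induced meridional suture; combined with the previous step this yields $ECH(\mathcal{S},\xi)\simeq \widehat{HFK}(-M,-K)$, as desired. The main obstacle is of course Conjecture~\ref{thm: sutured manifold} itself, whose proof is the object of the work in progress \cite{CGHS}; once it is established the corollary is essentially formal. A minor bookkeeping issue that should be checked is the compatibility of the $H_1$-decomposition of sutured ECH with the relative Spin$^c$-decomposition of sutured Heegaard Floer homology under the orientation reversal $M\rightsquigarrow -M$, but this parallels the closed-manifold case treated in \cite{CGH0}--\cite{CGH3} and should not introduce new difficulties.
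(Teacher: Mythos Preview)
Your proposal is correct and matches the paper's approach: the paper does not give a separate proof of this corollary but treats it as an immediate specialization of Conjecture~\ref{thm: sutured manifold} to the knot exterior with two meridian sutures, combined with Juh\'asz's identification $\widehat{HFK}(M,K)\simeq HF(M\setminus\op{int}(N(K)),\Gamma)$ recalled just before the corollary. Your summation over $A\in H_1$ to pass from the Spin$^c$-refined statement to the unrefined one makes explicit the only step the paper leaves implicit.
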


Suppose that $K\subset M$ is nullhomologous.  Fix a minimum genus Seifert surface $S$ for $K$ with genus $g$.  Then we can decompose $ECH(M\setminus\op{int}(N(K)),\Gamma,\xi)$ according to the homological intersection number $\iota(\bs\gamma)=\langle \bs\gamma,S\rangle$.  Similarly,
$$\widehat{HFK}(-M,-K)=\oplus_{j=-g}^{g} \widehat{HFK}(-M,-K,j)$$
where the decomposition is according to the relative $\op{Spin}^s$-structure, normalized so that its graded Euler characteristic $\sum_j \op{rk}\widehat{HFK}(-M,-K,j)\cdot t^j$ agrees with the Alexander polynomial of $K$. Corollary~\ref{thm: general knot} can be written more precisely as:
\begin{equation} \label{eqn: more precise}
ECH(M\setminus\op{int}(N(K)),\Gamma,\xi,\iota)\simeq \widehat{HFK}(-M,-K,g-\iota).
\end{equation}

By Equation~\eqref{eqn: more precise}, the ECH of a knot is supported in grading $0\leq \iota\leq 2g$. Combined with the work of Ghiggini~\cite{Gh} and Ni~\cite{Ni},
\begin{equation}
ECH(M\setminus\op{int}(N(K)),\Gamma,\xi,\iota=0)\simeq \mathbb{F}\langle\emptyset\rangle
\end{equation}
if and only if the knot is fibered.

\subsubsection{Relationship to symplectic Floer homology}

If $K\subset M$ is fibered, then $ECH(M\setminus\op{int}(N(K)),\Gamma,\xi,\iota=1)$ is the symplectic Floer homology of a special representative of the monodromy.

More precisely, let $K\subset M$ be a fibered knot with fiber $S$ and monodromy $\hh: S\stackrel\sim\to S$ such that $\hh|_{\bdry S}=\op{id}$. Let $N(\partial S)= \partial S \times [0,1]$ be a neighborhood of $\partial S=\bdry S\times \{0\}$ in $S$. We isotop $\hh$ to $\hh'$ on $S$ so that:
\be
\item during the isotopy $\bdry S\times\{0\}$ is rotated at most $\epsilon$, and
\item $\hh'$ is a rotation by an angle $\phi(r)$ on $\partial S\times \{r\}$ (here we are viewing $\bdry S\simeq S^1$ using the boundary orientation), where $\phi:[0,1]\to \R$ satisfies $\phi(0)=-\epsilon$, $\phi(\tfrac{1}{2})=0$, $\phi(1)=\epsilon$, and $\phi'(r)>0$.
\ee
We are also assuming that $\epsilon>0$ is sufficiently small so that the only periodic points of period $1$ in $N(\partial S)$ are along $\{r=\tfrac{1}{2}\}$.
We then perturb $\hh'$ to a flux zero area-preserving map $\hh''$ with respect to some area form $\omega$ so that all the fixed points are nondegenerate. In particular, two fixed points appear along $\{ r=\tfrac{1}{2}\}$, an elliptic one $e$ and a hyperbolic one $h$.

Let $SF^{\sharp}(S,\hh'')$ be the symplectic Floer homology of $\hh''$, whose chain complex is generated by all its fixed points but $e$.  By the methods of Hutchings and Sullivan \cite{HS} combined with \cite[Theorem~10.3.2]{CGHH},
\begin{equation}\label{eqn: next to top}
ECH(M\setminus\op{int}(N(K)),\Gamma,\xi,\iota=1)\simeq SF^{\sharp}(S,\hh'').
\end{equation}
Combining Equations~\eqref{eqn: more precise} and ~\eqref{eqn: next to top}, we obtain:

\begin{cor}\label{thm: monodromy}
$\widehat{HFK}(-M,-K,g-1)\simeq SF^{\sharp}(S,\hh'')$.
\end{cor}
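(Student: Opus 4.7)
The plan is to deduce the corollary by composing the two isomorphisms developed in the preceding paragraphs, specialized to the appropriate homological grading. More precisely, I would specialize Equation~\eqref{eqn: more precise} (the knot-graded version of the sutured ECH/HF isomorphism coming from Conjecture~\ref{thm: sutured manifold}) to $\iota=1$ and then compose with Equation~\eqref{eqn: next to top} (the identification of the $\iota=1$ piece of knot ECH with the symplectic Floer homology of the perturbed monodromy).

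The first step is to apply Equation~\eqref{eqn: more precise} with $\iota=1$, which yields
$$ECH(M\setminus\op{int}(N(K)),\Gamma,\xi,1)\simeq\widehat{HFK}(-M,-K,g-1).$$
Here the bijection between the intersection grading $\iota(\bs\gamma)=\langle\bs\gamma,S\rangle$ on the ECH side and the Alexander grading on the knot Floer side is the one fixed by the normalization placed just after Corollary~\ref{thm: general knot}, and the choice $\iota=1$ corresponds to the ``next-to-top'' Alexander grading $g-1$.

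The second step is to invoke Equation~\eqref{eqn: next to top}, which identifies
$$ECH(M\setminus\op{int}(N(K)),\Gamma,\xi,1)\simeq SF^{\sharp}(S,\hh'').$$
Conceptually, an adapted contact form on $(M\setminus\op{int}(N(K)),\Gamma)$ coming from the open book with page $S$ and monodromy close to $\hh''$ produces Reeb orbits whose first return map on $S$ is $\hh''$; fixed points of $\hh''$ then match Reeb orbits, and the elliptic fixed point $e$ along $\{r=\tfrac{1}{2}\}$ is the exceptional ``boundary'' orbit that one deletes when passing to the $\iota=1$ chain complex. This identification draws on the Hutchings--Sullivan machinery~\cite{HS} together with \cite[Theorem~10.3.2]{CGHH}, and is where the real analytic content sits.

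Chaining the two isomorphisms then gives exactly $\widehat{HFK}(-M,-K,g-1)\simeq SF^{\sharp}(S,\hh'')$. The only genuine obstacles are already packaged into Equations~\eqref{eqn: more precise} and~\eqref{eqn: next to top}: the former depends on the conjectural graded refinement of Conjecture~\ref{thm: sutured manifold}, and the latter on the careful holomorphic-curve count that matches ECH at $\iota=1$ with the symplectic Floer chain complex after omitting the elliptic orbit $e$. Once these are granted, the corollary itself is a pure composition of the two isomorphisms at the specific grading $\iota=1$, so I would expect the main effort to go into verifying that the grading shift $\iota\mapsto g-\iota$ is compatible with both identifications on the nose and that no extra generator slips in or out when one restricts to $\iota=1$ on the ECH side.
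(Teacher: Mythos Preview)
Your proposal is correct and matches the paper's approach exactly: the paper simply states that the corollary is obtained by ``combining Equations~\eqref{eqn: more precise} and~\eqref{eqn: next to top}'', which is precisely the composition you describe at $\iota=1$. Your additional commentary on the origin of Equation~\eqref{eqn: next to top} and the role of the elliptic orbit $e$ is accurate but goes beyond what the paper records for this corollary.
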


This result was implicitly conjectured by the two authors and Ghiggini as a combination of \cite[Conjecture~1.5]{CGHH} and
\cite[Theorem~10.3.2]{CGH0}, precisely formulated in the thesis \cite{Sp}, and first proven using a different argument in \cite{GS}. Recently, Kotelskiy \cite{Ko} reformulated this conjecture in the language of bordered Floer homology and gave some evidence for it.

\subsubsection{Dynamical characterization of product sutured manifolds}

Using Conjecture~\ref{thm: sutured manifold}, one can transfer results from the Heegaard Floer side to the ECH side.  The following is obtained by transferring the corresponding Heegaard Floer characterization (due to Juh\'asz \cite{Ju}) to ECH.

\begin{cor}\label{cor: product1}
The sutured contact manifold $(M,\Gamma, \xi)$ satisfies
$$ECH(M,\Gamma,\xi)\simeq\mathbb{F}$$
if and only if $(M,\Gamma)$ is a product sutured manifold.
\end{cor}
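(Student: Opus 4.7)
The plan is to derive the corollary from Conjecture~\ref{thm: sutured manifold} combined with Juh\'asz's characterization of product sutured manifolds via sutured Heegaard Floer homology \cite{Ju}.

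First I would sum the sector-by-sector isomorphism
$$ECH(M,\Gamma,\xi,A)\simeq HF(-M,-\Gamma,{\frak s}_\xi+PD(A))$$
over all $A\in H_1(M;\Z)$. The Poincar\'e--Lefschetz duality $H_1(M;\Z)\cong H^2(M,\bdry M;\Z)$ identifies the ECH indexing set with the relative $\op{Spin}^c$ indexing set on the Heegaard Floer side, and the affine identification sends $A$ to ${\frak s}_\xi+PD(A)$; adding up the summands therefore yields the total isomorphism
$$ECH(M,\Gamma,\xi)\simeq HF(-M,-\Gamma).$$

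Next I would invoke Juh\'asz's theorem \cite{Ju}, which characterizes product balanced sutured manifolds as precisely those whose sutured Heegaard Floer homology has rank one over $\mathbb{F}$. The property of being a product is preserved under simultaneously reversing the orientations of $M$ and $\Gamma$, so the characterization applies equally to $(-M,-\Gamma)$. Combining the two ingredients gives both directions of the corollary at once: $ECH(M,\Gamma,\xi)\simeq\mathbb{F}$ holds if and only if $HF(-M,-\Gamma)\simeq\mathbb{F}$, which in turn holds if and only if $(M,\Gamma)$ is a product sutured manifold.

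The main obstacle is of course the conjecture itself, i.e., the sutured version of the ECH/HF isomorphism that is the subject of the work in progress \cite{CGHS}; once it is in place, the reduction above is formal. I would also observe that the ``if'' direction admits a direct ECH proof on a preferred model: on $(S\times[-1,1],\bdry S\times\{0\})$ the standard adapted contact form $\lambda=dt+\beta$ with $d\beta$ an area form on $S$ has Reeb vector field $\partial_t$, which has no closed orbits, so the ECH chain complex reduces to $\mathbb{F}\langle\emptyset\rangle$ with trivial differential. Upgrading this to every adapted $\xi$ on a product sutured manifold requires $\xi$-independence of sutured ECH, which is itself accessed through the Heegaard Floer side via Conjecture~\ref{thm: sutured manifold}.
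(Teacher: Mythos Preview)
Your proposal is correct and follows essentially the same approach as the paper: the paper derives the corollary by ``transferring the corresponding Heegaard Floer characterization (due to Juh\'asz \cite{Ju}) to ECH'' via Conjecture~\ref{thm: sutured manifold}, and your argument simply spells out this transfer in detail. Your additional remarks --- the explicit summation over $H_1(M;\Z)$, the orientation-reversal observation, and the direct verification of the ``if'' direction on the standard model --- are welcome elaborations but do not constitute a different route.
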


Here is another formulation, which answers a question of John Pardon and can be thought of as an extension of the Weinstein conjecture \cite{T1} to sutured contact $3$-manifolds.

\begin{cor}\label{cor: product2}
If $(M,\Gamma, \xi)$ is a sutured contact manifold with adapted contact form $\lambda$ whose Reeb vector field $R_\lambda$ has no periodic orbit, then $(M,\Gamma,\xi)$ is a tight product sutured contact manifold $(S\times [-1,1],\Gamma=\bdry S\times\{0\},\xi)$, where $\xi$ is $[-1,1]$-invariant.  If $S$ is planar in addition, then every orbit of $R_\lambda$ flows from $S\times \{-1\}$ to $S\times \{1\}$; in particular $R_\lambda$ has no trapped orbits.
\end{cor}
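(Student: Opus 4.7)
The plan is to derive Corollary~\ref{cor: product2} from Corollary~\ref{cor: product1}, together with convex surface theory and a Poincar\'e--Bendixson style argument in the planar case.

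First, I would observe that the generators of the sutured ECH chain complex are (admissible) finite products of periodic orbits of $R_\lambda$.  Under the hypothesis that $R_\lambda$ has no periodic orbit, the only generator is the empty orbit set $\emptyset$ and the differential is identically zero.  Hence $ECH(M,\Gamma,\xi)\simeq \mathbb{F}\langle\emptyset\rangle$, and Corollary~\ref{cor: product1} yields that $(M,\Gamma)\cong (S\times[-1,1],\bdry S\times\{0\})$ as sutured manifolds.

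Next, I would upgrade this to a contactomorphism with the model product sutured contact manifold.  Since $[\emptyset]$ is the ECH contact class and generates $ECH(M,\Gamma,\xi)\simeq \mathbb{F}$, the contact class is nonzero; a sutured version of Hofer's theorem (equivalently, the vanishing of the ECH contact invariant for overtwisted structures) then forces $\xi$ to be tight.  On the product sutured manifold $(S\times[-1,1],\bdry S\times\{0\})$ the $[-1,1]$-invariant tight contact structure is the unique tight one up to isotopy: iterated convex cutting along compressing disks and essential annuli reduces $S\times[-1,1]$ to a union of $3$-balls with single-curve dividing sets, on which tightness is unique by Eliashberg.  This identifies $(M,\Gamma,\xi)$ contactomorphically with $(S\times[-1,1],\bdry S\times\{0\},\xi_{\mathrm{inv}})$.

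For the planar case, I would rule out trapped orbits directly.  Since $\lambda = Cdt + \beta$ on $U(\Gamma)$ and $R_\lambda$ is transverse to $R_\pm(\Gamma)$, the set $W\subset S\times\{-1\}$ of starting points whose forward $R_\lambda$-orbit exits through $S\times\{1\}$ in finite time is open.  If its complement $K\subset S\times\{-1\}$ were nonempty, then for $x\in K$ the $\omega$-limit set $\omega(x)\subset \op{int}(M)$ would be a nonempty compact invariant set on which $R_\lambda$ preserves the contact volume $\lambda\wedge d\lambda$.  Using planarity of $S$, I would construct a properly embedded transverse section $\Sigma\subset M$ meeting every orbit in $\omega(x)$; the first return map would then be an area-preserving homeomorphism of a subset of a planar surface, and a Brouwer-translation / Poincar\'e--Bendixson argument for area-preserving planar maps would furnish a periodic orbit of $R_\lambda$, contradicting the hypothesis.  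The main obstacle I foresee is precisely this last step --- constructing a cross-section $\Sigma$ compatibly with the boundary behavior of $R_\lambda$ near $A(\Gamma)$ and capturing enough of the trapped set to manufacture a genuine periodic orbit rather than only a recurrent one --- whereas the first two steps reduce to ECH chain-complex bookkeeping and standard convex-surface-theory uniqueness.
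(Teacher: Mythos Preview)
Your first two steps are correct and constitute exactly the derivation the paper intends: the corollary is stated without proof as a reformulation of Corollary~\ref{cor: product1}, and the uniqueness argument you outline (product disk decomposition followed by Eliashberg's classification on the ball) is precisely what the paper writes out in the converse direction of Theorem~\ref{thm: cylindrical}. One minor simplification: tightness follows more directly from Hofer's theorem---no periodic orbit means in particular no contractible one---which is how the paper argues in that proof; no appeal to the contact class is needed.

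For the planar assertion the paper offers no argument whatsoever; it states the conclusion and then immediately poses the non-planar analogue as an open problem. So there is nothing to compare your sketch against. Your Poincar\'e--Bendixson/Brouwer outline is a plausible line of attack, and you have correctly identified the genuine obstacle: manufacturing a cross-section that captures the trapped set and on which the first return map extends to an area-preserving planar homeomorphism, so that Brouwer's plane translation theorem (rather than mere recurrence) actually produces a periodic point. That step is where the planarity hypothesis must enter essentially, and making it precise is real work not supplied by the paper.
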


\begin{pb}
Can one prove that if there is no periodic orbit in $S\times [0,1]$, then there is also no trapped orbit even when $S$ is not planar?
\end{pb}

In the higher-dimensional case, there can be trapped orbits without periodic ones, as proven by Geiges, R\"ottgen and Zehmisch~\cite{GRZ}.

Recall the {\em depth} of a sutured manifold is the minimum number of steps in a sutured hierarchy needed to get to a product sutured manifold. We have the following dynamical characterization of depth $k$ sutured manifolds:

\begin{cor}\label{thm: depth}
If $(M,\Gamma ,\xi =\ker \lambda)$ is a taut sutured contact manifold of depth greater than $2k$ with $H_2 (M)=0$ and if $R_\lambda$ is nondegenerate and has no elliptic orbit, then it has at least $k+1$ hyperbolic orbits.
\end{cor}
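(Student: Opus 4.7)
My plan is to combine Conjecture~\ref{thm: sutured manifold} (the sutured ECH--HF isomorphism) with Juh\'asz's depth inequality in terms of the sutured Floer homology polytope~\cite{Ju}, translating the depth hypothesis on $(M,\Gamma)$ into a lower bound on the number of hyperbolic Reeb orbits of $R_\lambda$.

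First, I would apply Conjecture~\ref{thm: sutured manifold}: for each $A\in H_1(M;\Z)$,
$$ECH(M,\Gamma,\xi,A)\simeq HF(-M,-\Gamma,\mathfrak{s}_\xi+PD(A)).$$
Since $\operatorname{Spin}^c(-M,-\Gamma)$ is an affine space over $H^2(M,\bdry M;\Z)\cong H_1(M;\Z)$ (Poincar\'e--Lefschetz duality), this identifies the support $\mathcal A:=\{A\in H_1(M;\Z): ECH(M,\Gamma,\xi,A)\neq 0\}$ with the Spin$^c$-support of $SFH(-M,-\Gamma)$, shifted by $\mathfrak{s}_\xi$.

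Next, I would exploit the absence of elliptic orbits to bound $\mathcal A$ affinely. In ECH a hyperbolic orbit has multiplicity at most $1$ in any generator, so with no elliptic orbits every generator is simply a subset $S\subset\{\gamma_1,\dots,\gamma_h\}$ of the $h$ (good) hyperbolic orbits, of homology class $[S]=\sum_{\gamma\in S}[\gamma]\in H_1(M;\Z)$. Hence $\mathcal A$ lies in the $\Z$-span of $\{[\gamma_1],\dots,[\gamma_h]\}$, whose image in $H_1(M;\R)$ has real affine dimension at most $h$.

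Finally, I would invoke Juh\'asz's polytope--depth inequality~\cite{Ju}: for a taut balanced sutured manifold with $H_2(M)=0$,
$$\operatorname{depth}(M,\Gamma)\leq 2\dim P(M,\Gamma),$$
where $P(M,\Gamma)\subset H^2(M,\bdry M;\R)$ is the convex hull of the support of $SFH(-M,-\Gamma)$ (this is where the hypothesis $H_2(M)=0$ is used). Combining the preceding two paragraphs gives $\dim P(M,\Gamma)\leq h$, so $\operatorname{depth}(M,\Gamma)\leq 2h$, and the assumption $\operatorname{depth}(M,\Gamma)>2k$ forces $h\geq k+1$. The main obstacle is Step~1: one must track the chain of affine identifications---the ECH $H_1$-grading, the Spin$^c$-torsor structure on the Heegaard Floer side, the basepoint $\mathfrak{s}_\xi$, and Poincar\'e--Lefschetz duality---so that the affine-dimension bound on $\mathcal A$ genuinely transfers to the polytope $P(M,\Gamma)$ appearing in Juh\'asz's inequality; once that alignment is in place, the remaining two steps are elementary.
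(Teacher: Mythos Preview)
Your approach is different from the paper's and more roundabout. The paper does not touch the Spin$^c$ decomposition or the polytope at all: it invokes Juh\'asz's rank inequality \cite[Theorem~4]{Ju2}, which says that a taut balanced sutured manifold with $H_2(M)=0$ and depth $>2k$ satisfies $\operatorname{rk} HF(-M,-\Gamma)\geq 2^{k+1}$. Via Conjecture~\ref{thm: sutured manifold} this forces the ECH chain complex itself to have rank at least $2^{k+1}$. Since in the absence of elliptic orbits every ECH generator is a subset of the $h$ hyperbolic orbits, there are at most $2^h$ generators, whence $2^h\geq 2^{k+1}$ and $h\geq k+1$. This uses only total ranks and so completely sidesteps the ``main obstacle'' you identify: there is no need to align the $H_1$-grading on ECH with the Spin$^c$ torsor on the Heegaard Floer side.

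Your route also rests on the inequality $\operatorname{depth}(M,\Gamma)\leq 2\dim P(M,\Gamma)$, which is not the form in which Juh\'asz's result is stated (and the relevant reference is the polytope paper \cite{Ju2}, not \cite{Ju}). What Juh\'asz actually proves is the rank bound above; his surface-decomposition argument cuts the support by hyperplanes and halves the rank at each step, but it does not obviously reduce $\dim P$ at each step, so extracting the dimension inequality you want would itself require justification. The rank argument is both shorter and rests on the theorem as actually proved.
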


\begin{proof}
Under the hypothesis of the theorem, Juh\'asz \cite[Theorem~4]{Ju2} shows that $$\op{rk} HF(-M,-\Gamma) \geq 2^{k+1}.$$
By Conjecture~\ref{thm: sutured manifold}, the ECH chain complex must have rank $\geq 2^{k+1}$. When there are no elliptic orbits, this implies the existence of at least $k+1$ hyperbolic orbits for $R_\lambda$.
\end{proof}

Note that every Reeb vector field can be perturbed so that it only has hyperbolic orbits up to a certain action threshold $L$ \cite[Theorem~2.5.2]{CGH1} and the number of hyperbolic orbits seems to go to $\infty$ as $L\to \infty$ whenever there is an elliptic orbit to start with.

\subsubsection{Nonvanishing of contact invariants}

One of the central tools in recent years has been the following:

\begin{thm}[Ozsv\'ath-Szab\'o~\cite{OSz4}]\label{thm: contact class}
If $(M,\xi)$ is a symplectically semi-fillable contact structure, then the contact invariant $c(\xi)$ in $\widehat{HF}(M;\mathbb{F}[H_2(M;\Z)])$ with respect to ``twisted coefficients'' is nonzero. In particular, it is the case when $\xi$ is the deformation of a taut foliation.
\end{thm}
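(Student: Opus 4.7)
The plan is to reduce to the case of a weak symplectic filling, embed that filling into a closed symplectic $4$-manifold with $b_2^+>1$, and then deduce the nonvanishing of $c(\xi)$ from the nontriviality of the Ozsv\'ath-Szab\'o mixed invariant of the enclosing closed manifold, i.e., in the end from Taubes' equivalence of Seiberg-Witten and Gromov invariants on closed symplectic $4$-manifolds.

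First I would dispose of the ``semi'' in semi-fillable. Given a semi-filling $(W,\omega)$ with $\bdry W=(M,\xi)\sqcup(M',\xi')$, I would attach a concave symplectic cap along $(M',\xi')$ using Eliashberg's (and Etnyre's) capping construction to produce a genuine weak filling $(W',\omega')$ of $(M,\xi)$ alone. The cohomology class $[\omega|_M]\in H^2(M;\R)$ survives this operation, and it is precisely this class that forces the use of twisted coefficients in $\mathbb{F}[H_2(M;\Z)]$.

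Next I would cap $W'$ off to a closed symplectic $4$-manifold $X$ with $b_2^+(X)>1$ via the Eliashberg-Etnyre symplectic cap theorem. Removing a small ball from $W'$ turns it into a cobordism from $S^3$ to $M$, and by Ozsv\'ath-Szab\'o's definition of $c(\xi)$ through an adapted open book (equivalently, through Legendrian surgery) the contact class, taken with the twisted coefficients encoded by $[\omega|_M]$, is identified with the image of the top generator of $\widehat{HF}(S^3)$ under the associated cobordism map in the canonical $\mathrm{Spin}^c$-structure $\mathfrak{s}_\omega$. Nonvanishing of $c(\xi)$ is thereby reduced to nonvanishing of the relevant summand of the mixed invariant $\Phi_{X,\mathfrak{s}_\omega}$.

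The hardest step, and the genuine role of the twisted coefficients, is to prevent $\Phi_{X,\mathfrak{s}_\omega}$ from being killed by a cancellation coming from the complementary cap, which can occur over $\mathbb{F}$ but not over the Novikov-type module determined by $[\omega|_M]$. In this twisted theory $\Phi_{X,\mathfrak{s}_\omega}\neq 0$ via the Ozsv\'ath-Szab\'o identification of Heegaard Floer with Seiberg-Witten Floer together with Taubes' $SW=\mathrm{Gr}$, so pushing the nonvanishing back through the cobordism decomposition yields $c(\xi)\neq 0$ in $\widehat{HF}(M;\mathbb{F}[H_2(M;\Z)])$. The final sentence of the theorem then follows by applying Theorem~\ref{thm: eliashberg-thurston}: a taut foliation other than the sphere foliation of $S^1\times S^2$ perturbs to a semi-fillable contact structure, to which the main statement applies.
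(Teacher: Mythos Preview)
Your sketch is essentially the original Ozsv\'ath--Szab\'o argument from~\cite{OSz4}: cap off the extra boundary component to a weak filling, embed in a closed symplectic $4$-manifold $X$ with $b_2^+>1$, and deduce $c(\xi)\neq 0$ from the nonvanishing of the closed $4$-manifold mixed invariant $\Phi_{X,\mathfrak{s}_\omega}$. One small correction: Ozsv\'ath and Szab\'o do not pass through Seiberg--Witten theory and Taubes' $SW=\mathrm{Gr}$ to get $\Phi_{X,\mathfrak{s}_\omega}\neq 0$; they prove this nonvanishing directly in Heegaard Floer homology, via Lefschetz pencils and a surgery-exact-triangle argument. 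Otherwise your outline is sound.

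The paper, however, takes a genuinely different route. Rather than capping off and invoking closed $4$-manifold invariants, it works in the ECH model and uses the semi-filling $(W,\omega)$ itself as a cobordism from $(M,\xi)\sqcup(M',\xi')$ to the empty manifold. Granting chain-level cobordism maps in ECH (work in progress of Rooney~\cite{Ro}), this gives a map
\[
\Phi\colon ECC(M)\otimes ECC(M')\longrightarrow ECC(\emptyset)\simeq\mathbb{F}
\]
with $\Phi\partial=0$ and $\Phi(\emptyset\otimes\emptyset)=\emptyset$. If $c(\xi)=[\emptyset]$ vanished, say $\partial x=\emptyset$, then $\Phi\partial(x\otimes\emptyset)=\Phi(\emptyset\otimes\emptyset)=\emptyset\neq 0$, contradicting $\Phi\partial=0$. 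Compared with your approach, this argument is shorter and stays entirely on the symplectic/holomorphic-curve side, avoiding both the capping constructions and the closed-manifold nonvanishing theorem; on the other hand it is conditional on the existence of the ECH cobordism maps at the chain level, whereas your route rests on established results.
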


This result contrasts with the fact that $c(\xi)=0$ when $\xi$ is overtwisted.


\begin{proof}
We explain how to prove Theorem~\ref{thm: contact class} in the equivalent setting of ECH, {\em assuming the existence of cobordism maps that are defined on the chain level and count $J$-holomorphic curves.}  This is work in progress of Jacob Rooney~\cite{Ro}.

Let $(W,d\beta)$ be a symplectic semi-filling of $(M,\xi)$ such that $\bdry W=(M,\xi)\sqcup (M',\xi')$.
The semi-filling $W$, together with an adapted almost complex structure $J$ on $W$, induces a cobordism map
$$\Phi: ECC(M)\otimes ECC(M')\to ECC(\emptyset)\simeq \mathbb{F},$$
where $ECC$ denotes the ECH chain complex; it satisfies $\Phi\bdry=0$ since $\bdry\Phi=0$. Recall that the contact class $c(\xi)$ is represented by the empty set $\emptyset$ of orbits; $\Phi$ is defined so it satisfies $\Phi(\emptyset\otimes\emptyset)=\emptyset$.  If $c(\xi)=0$, then there exists $x\in ECC(M)$ such that $\bdry x=\emptyset$. Then
$$\Phi\bdry (x\otimes\emptyset)= \Phi(\emptyset\otimes \emptyset)=\emptyset,$$
which contradicts $\Phi\bdry=0$.
\end{proof}


Using the same circle of ideas, Ozsv\'ath-Szab\'o~\cite{OSz4} showed that if $M$ admits a taut foliation, then $\op{rk} \widehat{HF}(M)>|H_1 (M;\Z)|$, i.e., $M$ is not an $L$-space, leading to the following conjecture \cite{BGW, Ju4}:

\begin{conj}
A closed rational homology $3$-sphere $M$ carries a taut foliation if and only if $M$ is not an $L$-space.
\end{conj}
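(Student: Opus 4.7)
One direction is already in hand: Ozsv\'ath--Szab\'o's inequality $\operatorname{rk}\widehat{HF}(M)>|H_1(M;\Z)|$ (a corollary of Theorem~\ref{thm: contact class} combined with the Eliashberg--Thurston deformation and the semi-fillability criterion) shows that if $M$ admits a taut foliation then $M$ is not an $L$-space. So the plan is entirely devoted to the reverse implication: assuming $\operatorname{rk}\widehat{HF}(M)>|H_1(M;\Z)|$, I would try to construct a taut foliation on $M$. Since there is no known functorial way to extract a codimension-one foliation from a Heegaard Floer class, my strategy would be to split along the geometric decomposition of $M$ and treat each geometric piece by different means, then glue using the sutured manifold technology reviewed in Section~\ref{section: suture}.

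First I would dispose of the cases where constructive foliation techniques are already well-developed. For Seifert-fibered rational homology spheres the non-$L$-space condition is equivalent to having a horizontal foliation (work of Lisca--Stipsicz, Boyer--Rolfsen--Wiest and Jankins--Neumann translated into Heegaard-Floer terms by Rasmussen and others), so here one reads off a taut foliation from the Seifert invariants. For graph manifold rational homology spheres the same equivalence has been largely established (Boyer--Clay, Hanselman--Rasmussen--Rasmussen--Watson), using slope detection on JSJ tori and Gabai-style gluing of foliations on the pieces. In both regimes I would re-import these arguments verbatim, so the real content is the remaining hyperbolic case.

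For a hyperbolic rational homology sphere that is not an $L$-space, the plan would be to show that $M$ contains an essential lamination, upgrade it to a Reebless foliation, and then verify tautness. Since $H_2(M;\Z)=0$ there is no nonzero class to feed directly into Gabai's sutured hierarchy (Theorem~\ref{thm: Gabai sutured hierarchy}); the hope is that the rank excess $\operatorname{rk}\widehat{HF}(M)-|H_1(M;\Z)|$ detects a Thurston-norm-minimizing surface inside some non-trivial sutured decomposition of a suitable knot or link exterior in $M$. Concretely, I would try to promote non-$L$-space-ness to the existence of a nontrivial element in a sutured Floer group of a suitably chosen knot complement (the knot being chosen with minimal genus Seifert surface maximizing the relative Spin$^c$ support), appeal to the conjectural sutured ECH $=$ HF isomorphism (Conjecture~\ref{thm: sutured manifold}) to obtain a non-vanishing ECH contact class in that sutured piece, and then invoke the Eliashberg--Thurston inverse (Theorem~\ref{thm: uniqueness}) to deform the resulting tight contact structure, or rather its associated confoliation, into a taut foliation. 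Finally one has to fill in along the knot, which should be possible if the slope was chosen detected by the taut foliation.

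The main obstacle will of course be the hyperbolic case: there is no known bridge that produces a genuine lamination, let alone a foliation, from the mere rank inequality on $\widehat{HF}$. Even the intermediate step of going from non-$L$-space to left-orderability of $\pi_1(M)$ (as in \cite{BGW}) is open, and one of my paragraphs collapses to ``assume the $L$-space conjecture on pieces''. A more honest plan is therefore conditional: combine the Seifert and graph-manifold results with a reduction of the hyperbolic case to producing an essential lamination whose complementary regions admit monodromies compatible with Gabai's finite-depth construction, and leave this last reduction as the central unresolved point.
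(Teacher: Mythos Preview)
The statement you were asked to prove is labeled a \emph{Conjecture} in the paper, and the paper offers no proof of it --- it is the $L$-space conjecture, which remains open. So there is no ``paper's own proof'' to compare your proposal against; your task was ill-posed from the start.

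That said, your write-up is an accurate survey of the state of affairs. You correctly note that the forward direction (taut foliation $\Rightarrow$ not an $L$-space) is the Ozsv\'ath--Szab\'o result recorded just before the conjecture, and you correctly isolate the reverse direction as the open content. Your treatment of the Seifert-fibered and graph-manifold cases reflects genuine theorems in the literature. But your plan for the hyperbolic case is not a proof sketch: the step ``promote non-$L$-space-ness to a nontrivial sutured Floer element for a well-chosen knot complement, pass through Conjecture~\ref{thm: sutured manifold}, then invoke an Eliashberg--Thurston inverse to get a taut foliation'' strings together several moves none of which is known to work. In particular, Theorem~\ref{thm: uniqueness} goes from foliations to contact structures, not the other way, and there is no mechanism that converts a nonvanishing contact or ECH class back into a foliation or lamination. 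You acknowledge this yourself in your final paragraph, which is the honest assessment: the hyperbolic case is genuinely open, and your proposal does not close it.
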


It is also conjectured that the existence of a taut foliation is equivalent to the left-orderability of $\pi_1(M)$ for a rational homology sphere $M$. In one direction, Calegari-Dunfield~\cite[Corollary~7.6]{CD} showed that the existence of a taut foliation on a atoroidal manifold implies the left-orderability of $\pi_1$ of a finite cover of $M$.

\begin{pb}
Find a relationship between contact geometry and the left-orderability of $\pi_1(M)$.
\end{pb}

\begin{pb}
Show that if $M$ is Haken and $\xi$ is a contact structure on $M$, then $\op{rk}(\widehat{ECH} (M,\xi)) \geq 2$. One might want to find tight contact structures with nonvanishing contact class; see Problem \ref{pb: Haken}.
\end{pb}

\subsection{Cylindrical contact homology of sutured manifold}

In this subsection, we prove another contact characterization of product sutured manifolds:

\begin{thm}\label{thm: cylindrical}
A taut balanced sutured manifold $(M,\Gamma)$ is not a product if and only if it carries an adapted hypertight contact form whose cylindrical contact homology has rank $\geq 1$.
\end{thm}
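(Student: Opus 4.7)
The forward implication is straightforward: if $(M,\Gamma)=(S\times[-1,1],\bdry S\times\{0\})$ is a product, the standard adapted contact form $\lambda_0=dt+\beta$ (with $d\beta$ an area form on $S$) has Reeb vector field $\bdry_t$ and no periodic orbits whatsoever, so its cylindrical chain complex is zero and its cylindrical contact homology vanishes. Since the tight adapted contact structure on a product sutured manifold is unique up to isotopy through adapted contact structures, and cylindrical contact homology of a hypertight contact form depends only on the isotopy class of the underlying hypertight contact structure, every adapted hypertight contact form on $(M,\Gamma)$ has trivial cylindrical contact homology.

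The reverse implication is where the work lies. The plan is to refine the branched-surface construction used in the proofs of Theorems~\ref{thm: fibration} and~\ref{thm: hypertight} so that the Reeb dynamics encodes a Handel--Miller end-periodic pseudo-Anosov representative. First, since $(M,\Gamma)$ is non-product, taut and balanced, we produce a depth-one taut foliation $\F$ on $(M,\Gamma)$ whose compact leaves are $R_\pm(\Gamma)$ and whose noncompact leaves $L$ spiral onto $R_\pm$; the holonomy first-return map of such a leaf is an end-periodic diffeomorphism $h\colon L\to L$. Applying the Handel--Miller Nielsen--Thurston-type classification, we isotope $h$ to a canonical representative $h'$ whose pseudo-Anosov pieces carry a pair of transverse invariant end-periodic measured laminations. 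Together with the spiralling of $\F$ onto $R_\pm$, these laminations assemble into an essential lamination of $M$ supported by a laminar branched surface $\B$.

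Next, adapting the gluing construction of Theorems~\ref{thm: fibration} and~\ref{thm: hypertight}, we build an adapted contact form $\lambda$ on $(M,\Gamma)$ whose Reeb vector field $R_\lambda$ is positively transverse to $\B$ and whose flow on $M\setminus N(\B)$ suspends $h'$. Positive transversality to a laminar branched surface, combined with Theorem~\ref{thm: Gabai-Oertel}, guarantees hypertightness, and each periodic point of $h'$ of period $n$ produces a closed Reeb orbit $\gamma$ of $R_\lambda$ whose free homotopy class in $M$ is determined by the Nielsen class of the underlying $h'$-periodic orbit.

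It remains to exhibit a good Reeb orbit that represents a nonzero class in cylindrical contact homology. The cylindrical chain complex splits as a direct sum over free homotopy classes of loops in $M$, and pseudo-holomorphic cylinders in the symplectization preserve this splitting. In the pseudo-Anosov part of the Handel--Miller representative, distinct fixed points of $h'$ lie in distinct Nielsen classes, so a single fixed point $p$ of $h'$ gives a simple Reeb orbit $\gamma_p$ that is the unique orbit in its free homotopy class; hence $\bdry\gamma_p=0$ and $\gamma_p$ cannot be a boundary, so $[\gamma_p]\neq 0$. The main obstacle will be this chain-level uniqueness: one must verify that the smoothings and gluings required to realise $h'$ as a genuine Reeb flow (rather than merely an abstract suspension) do not introduce extra orbits in the free homotopy class of $\gamma_p$, and that no pseudo-holomorphic cylinder from a higher-period or multiply-covered orbit accidentally lands on $\gamma_p$. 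This is precisely where the rigidity built into the Handel--Miller canonical form, together with a careful choice of tailored almost complex structure respecting the foliated decomposition coming from $\F$, must be leveraged to upgrade the dynamical uniqueness of $p$ to an honest chain-level statement.
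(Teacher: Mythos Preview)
Your overall strategy---Handel--Miller theory for end-periodic monodromy, splitting cylindrical contact homology over free homotopy classes (equivalently Nielsen classes)---is the paper's strategy too. But the paper avoids precisely the ``main obstacle'' you flag, and your proposed workaround has two real gaps. First, you never need an orbit that is \emph{unique} in its Nielsen class. The paper instead observes that, in each Nielsen class, the total Lefschetz number of the relevant periodic points equals the Euler characteristic of the corresponding summand of the cylindrical chain complex; hence any Nielsen class with nonzero Lefschetz number forces nontrivial homology in that summand, independently of how many Reeb orbits the smoothing actually produces there. This is robust under the perturbations you were worried about, since Lefschetz data in a Nielsen class is a homotopy invariant. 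Second, your argument assumes the Handel--Miller representative has a pseudo-Anosov piece, but it need not: the nuclei may all be periodic. The paper handles this case by noting that some periodic nucleus has negative Euler characteristic, which again yields a Nielsen class with nonzero Lefschetz number---something your uniqueness-based argument does not provide.

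There is also a structural difference. You assert a depth-one taut foliation on $(M,\Gamma)$ directly and then try to build a Reeb flow suspending the Handel--Miller representative via a new branched-surface construction. The paper instead uses the injection of Theorem~\ref{thm: injection}: since cylindrical contact homology injects under each sutured gluing, it suffices to produce nontrivial homology at the \emph{first} non-product stage of a hierarchy, i.e.\ for $(P',\Gamma')$ obtained by a single gluing from a product $(P,\Gamma)$. The vertical completion of $(P',\Gamma')$ is then manifestly the mapping torus of an end-periodic diffeomorphism $\phi_\infty$ of an end-periodic surface $S_\infty$, and the Reeb orbits are exactly the periodic orbits of $\phi_\infty$---no delicate suspension construction is required. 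This reduction is what legitimately places you in the end-periodic setting.
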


First recall from \cite{CGHH} that when $(M,\Gamma,U(\Gamma),\xi=\ker \lambda)$ is a sutured contact manifold, we can define its {\em vertical completion} by gluing
\begin{itemize}
\item $([1,+\infty)\times R_+(\Gamma),dt+\lambda\vert_{R_+(\Gamma)})$ to $(M,\lambda)$ along
$R_+(\Gamma)=\{1\} \times R_+(\Gamma)$ and
\item $((-\infty,-1]\times R_-(\Gamma),dt+\lambda\vert_{R_-(\Gamma)})$ to $(M,\lambda)$ along $R_-(\Gamma)=\{1\} \times R_-(\Gamma)$.
\end{itemize}

\begin{proof}
If $(M,\Gamma)$ is a taut balanced sutured manifold, then it admits a sutured manifold hierarchy by Gabai~\cite{Ga} and carries a hypertight contact structure by Theorem~\ref{thm: hypertight previous version}; this is obtained from the product sutured contact manifold by successive gluing. By Theorem \ref{thm: injection} we know that, at each gluing step of the hierarchy, the cylindrical contact homology of the previous piece injects into the cylindrical contact homology of the next one.

In order to prove the theorem, it suffices to show that gluing step corresponding to the decomposition $(P',\Gamma')\stackrel{S}\rightsquigarrow (P,\Gamma)$, where $(P,\Gamma)$ is a product and $(P',\Gamma')$ is not a product, produces $(P',\Gamma',\xi')$ with nontrivial cylindrical contact homology.

We first give a brief description of the gluing step:  Let $(P,\Gamma,\xi)$ be a product sutured contact manifold with contact form $dt+\lambda$ where
$$P=[-1,1]_t\times R ~~~\mbox{ and } ~~~ R_\pm(\Gamma)=\{\pm 1\}\times R=\{\pm 1\}\times R_\pm (\Gamma).$$
Let $S_\pm$ be a compact subsurface with corners in $R_\pm (\Gamma)$ such that $\partial S_\pm$ is the concatenation of arcs $a_\pm^1,b_\pm^1,\dots, a_\pm^k,b_\pm^k$ in (cyclic) order, where
$$\op{int}(a_\pm^i) \subset \op{int}(R_\pm (\Gamma))~~~\mbox{ and } ~~~ b_\pm^i \subset \partial R_\pm (\Gamma).$$
Let $\phi :S_+\stackrel\sim \to S_-$ be a diffeomorphism with $\phi(a_+^i) =b_-^i$ and $\phi(b_+^i) =a_-^i$. Then, modulo some adjustments to the contact form that we need to make near $S_+$ and $S_-$, we can glue $(P,\Gamma,\xi)$ along $S_+$ and $S_-$ using $\phi$ and apply smoothing operations to obtain the sutured contact manifold $(P',\Gamma',\xi')$; see \cite{CGHH}.

Another way to construct $(P',\Gamma',\xi')$ is as follows and is described in detail in \cite[Section 4.3, Steps 1-4]{CGHH}:
\be
\item Glue $(P,\Gamma,\xi)$ along $S_+$ and $S_-$ using $\phi$.
\item Glue $([1,3]\times (R_+(\Gamma) \setminus S_+), dt+\lambda\vert_{R_+(\Gamma)})$ to the result of (1) by identifying
\begin{gather*}
\{1\} \times (R_+(\Gamma) \setminus S_+) ~~~\mbox{ and } ~~~ \{1\} \times (R_+(\Gamma) \setminus S_+)\\
[1,3]\times a_+ ~~~ \mbox{ and } ~~~ [-1,1]\times b_-,
\end{gather*}
where we are writing $a_\pm= \cup_{i=1}^k a_\pm ^k$ and $b_\pm =\cup_{i=1}^k b_\pm^k$.
\item Iterate the process to glue fibered pieces
\begin{gather*}
([2n+1,2n+3]\times (R_+(\Gamma) \setminus \op{int}(S_+)), dt+\lambda\vert_{R_+(\Gamma)}),~~~ n\geq 1\\
([-2n-1,-2n+1]\times (R_-(\Gamma) \setminus \op{int}(S_-)), dt+\lambda\vert_{R_-(\Gamma)}),~~~ n\geq 1.
\end{gather*}
\ee

This leads us to the vertical completion $(P'_\infty,\Gamma'_\infty,\xi'_\infty)$ of $(P',\Gamma',\xi')$; see Figure~\ref{figure: spiral}.
\begin{figure}[ht]
\begin{overpic}[width=9cm]{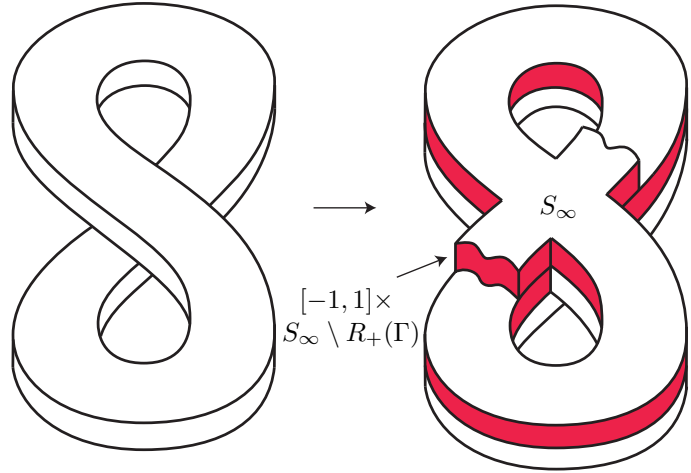}
\put(78,38){\small $S_\infty$}
\put(42.5,24){\small $[-1,1]\times $ }
\put(40,19.2){\small $S_\infty\setminus R_+(\Gamma)$}
\end{overpic}
\caption{The construction of $S_\infty$} \label{figure: spiral}
\end{figure}


The noncompact surface $S_\infty \subset P'_\infty$ given by
\begin{align*}
S_\infty & =(\{1\}\times R) \cup (\cup_{n\geq 1} \{2n+1\} \times  (R_+(\Gamma) \setminus \op{int}(S_+)))\\
& \qquad \qquad \cup ( \cup_{n\geq 1} \{-2n-1\} \times  (R_-(\Gamma) \setminus \op{int}(S_-)))
\end{align*}
divides $(P'_\infty,\Gamma'_\infty,\xi'_\infty)$ into a noncompact sutured manifold $(P_\infty,\Gamma_\infty,\xi_\infty)$, which is an horizontal extension of $(P,\Gamma,\xi)$ by product pieces.  Conversely, $(P'_\infty,\Gamma'_\infty,\xi'_\infty)$ is obtained from  $(P_\infty,\Gamma_\infty,\xi_\infty)$ by gluing $R_+(\Gamma_\infty)\simeq S_\infty$ to $R_-(\Gamma_\infty)\simeq S_\infty$ by a diffeomorphism $\phi_\infty : S_\infty \stackrel\sim\longrightarrow S_\infty$.

An analysis of the situation shows that the surface $S_\infty$ is an {\em end-periodic surface} and $\phi_\infty$ an {\em end-periodic diffeomorphism} of $S_\infty$. We refer to \cite{Fe} for precise definitions. End-periodic diffeomorphisms of end-periodic surfaces have a Thurston-Nielsen theory which was discovered by Handel-Miller, described by Fenley~\cite{Fe}, and developed by Cantwell-Conlon~\cite{CC}.

In our case, the relevant fact is the following:

\begin{fact}
If  $(P'_\infty,\Gamma'_\infty)$ --- or equivalently its compactification $(P',\Gamma')$ --- is not a product, then $\phi_\infty: S_\infty\stackrel\sim\longrightarrow S_\infty$ is isotopic to an end-periodic homeomorphism $\psi_\infty: S_\infty\stackrel\sim\longrightarrow S_\infty$ which has a finite number of periodic points of minimum period, say $k$, and with nonzero total Lefschetz number in their Nielsen classes.
\end{fact}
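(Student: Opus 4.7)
The plan is to apply the Handel--Miller classification of end-periodic homeomorphisms, extract a canonical representative $\psi_\infty$ in the isotopy class of $\phi_\infty$, and then use classical Lefschetz--Nielsen theory on its core. Under this plan, the nonzero total Lefschetz number of periodic points of minimum period will be the standard consequence of $\psi_\infty$ having pseudo-Anosov-like dynamics on a compact invariant subsurface, and the main work lies in linking the non-product hypothesis on $(P',\Gamma')$ with the nontriviality of the Handel--Miller core.

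First I would invoke Handel--Miller theory in the form described by Fenley~\cite{Fe} and developed by Cantwell--Conlon~\cite{CC}: every end-periodic homeomorphism of an end-periodic surface is isotopic through compactly supported isotopies to a canonical representative $\psi_\infty$ that carries a pair of transverse invariant geodesic laminations, a compact ``Nielsen core'' $K\subset S_\infty$ concentrating the essential recurrent dynamics, and acts as an end-translation on the complementary ends. In particular, periodic points of $\psi_\infty$ of any given period lie in $K$ and are hence isolated and finite in number.

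Second, I would translate the non-product hypothesis on $(P',\Gamma')$ into nonemptiness of the Handel--Miller core of $\psi_\infty$. If $(P',\Gamma')$ is a product, then $(P'_\infty,\Gamma'_\infty)$ is a trivial $\R$-family of surfaces and $\phi_\infty$ can be taken isotopic to a pure end-translation, whose Handel--Miller core is empty; conversely, an empty core allows one to reconstruct a product structure on $(P'_\infty,\Gamma'_\infty)$. This equivalence --- between sutured-topological triviality and end-periodic-dynamical triviality --- is the main obstacle, and the natural bridge is the depth function of Cantwell--Conlon~\cite{CC}: depth zero corresponds to the product case, and strictly positive depth forces nontrivial dynamics in the Handel--Miller core.

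Third, once $K$ is known to be nonempty, I would apply the Thurston--Nielsen analysis to the pseudo-Anosov (and finite-order) pieces of the decomposition of $\psi_\infty$ on $K$. Let $k$ be the minimum period of periodic points of $\psi_\infty$ in $K$. In any Nielsen class of period-$k$ points meeting a pseudo-Anosov piece, all fixed points of $\psi_\infty^k$ carry Lefschetz indices of coherent sign (this is the content of Thurston--Nielsen rigidity), so their indices sum to a nonzero integer; an analogous direct computation handles the finite-order pieces. The finitely many periodic points of minimum period $k$ therefore have nonzero total Lefschetz number in their Nielsen classes, as the Fact asserts.
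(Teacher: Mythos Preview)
Your proposal is correct and takes essentially the same route as the paper: both invoke the Handel--Miller theory of Fenley and Cantwell--Conlon to replace $\phi_\infty$ by a canonical representative $\psi_\infty$ with invariant geodesic laminations and a compact invariant core (the paper's ``nucleus''), use the non-product hypothesis to ensure this core is nonempty, and then run the Thurston--Nielsen decomposition on the core to produce Nielsen classes of periodic points with nonzero total Lefschetz number. The paper is slightly more concrete in the endgame --- it separates off the case where the nucleus is a rectangle, and in the all-periodic case it argues via a component of negative Euler characteristic rather than an unspecified ``direct computation'' --- but the overall strategy is the same.
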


Briefly, if indeed $(P'_\infty,\Gamma'_\infty)$ is not a product, then $\phi_\infty$ is not isotopic to a translation and there exist $\psi_\infty$ isotopic to $\phi_\infty$ and a pair of geodesic laminations $\Lambda_+,\Lambda_-\subset S_\infty$ with nonempty intersection \cite[Proposition 4.28]{CC} that are preserved by $\psi_\infty$ \cite[Theorem 4.54]{CC}, such that:
\begin{enumerate}
\item The dynamical system
$$\psi_\infty :  \Lambda_+ \cap \Lambda_- \to \Lambda_+ \cap \Lambda_-$$
is conjugate to a two-sided Markov shift of finite type \cite[Theorem 9.2]{CC}.
\item There is a union $\mathcal{P}_+$ (resp.\ $\mathcal{P}_-$) of components of $S_\infty \setminus \Lambda_+$ (resp.\ $S_\infty \setminus \Lambda_-$) such that all the points of $S_\infty \setminus(\Lambda_+\cup \mathcal{P}_+)$ (resp.\ $S_\infty \setminus (\Lambda_-\cup \mathcal{P}_-)$)  belong to the ``negative (resp.\ positive) escaping set" and hence are not periodic points \cite[Lemma 5.15]{CC}.
\end{enumerate}

There exists a compact region of the form $cl(Q_+\cap Q_-)$, where $Q_\pm$ is a component of $\mathcal{P}_\pm$ (this is called a {\em nucleus} \cite[Definition 6.41]{CC}), whose piecewise smooth boundary alternates between arcs of $\Lambda_+$ and arcs of $\Lambda_-$ and which is left invariant by some $\psi^k_\infty$. If $cl(Q_+\cap Q_-)$ is a rectangle, then there is a hyperbolic orbit which is isolated in its Nielsen class.  Otherwise we apply the Thurston-Nielsen decomposition to $(cl(Q_+\cap Q_-),\psi^k_\infty)$. If there is a pseudo-Anosov component, then there are infinitely many Nielsen classes with nonzero total Lefschetz number. If all the components are periodic, at least one component has negative Euler number, again giving rise to a Nielsen class with nonzero total Lefschetz number.

In particular, in every such Nielsen class, $\phi_\infty$ itself has periodic points with nonzero total Lefschetz number. Those periodic points correspond to closed orbits of a Reeb vector field that intersects $S_\infty$ $k$ times. The differential in cylindrical contact homology counts holomorphic cylinders, and hence preserves the splitting of the orbits into Nielsen classes. Moreover, the total Lefschetz number in a Nielsen class corresponds to the Euler characteristic of the corresponding subcomplex, so every Nielsen class with nonzero Lefschetz number gives at least one generator in cylindrical contact homology.

Conversely, if $(M,\Gamma)$ is a product sutured manifold and if a contact structure $\xi$ adapted to $(M,\Gamma)$ has a well-defined cylindrical contact homology, then $\xi$ is tight by Hofer~\cite{Ho}. Since $(M,\Gamma)$ is product disk decomposable, we can normalize the contact structure on a collection of compressing disks and apply Eliashberg's classification of tight contact structures on the remaining ball \cite{El2} to show that there is a unique tight contact structure  adapted to $(M,\Gamma)$, namely the $[-1,1]$-invariant one. Hence $\xi$ has a Reeb vector field without periodic orbits and its cylindrical contact homology is trivial. 
\end{proof}

\begin{rmk}
The above proof allows us to estimate the size of $HC(M,\Gamma,\xi)$ depending on $\phi_\infty$. For example, if $\phi_\infty$ is reducible with a pseudo-Anosov component, then $\op{rk} HC(M,\Gamma,\xi)=\infty$ and the rank of the subspace generated by orbits of action less than $L>0$ grows exponentially with $L$.
\end{rmk}

\begin{pb}
Use this presentation via end-periodic diffeomorphisms of end-periodic surfaces to give a direct proof of Corollary~\ref{cor: product1}, i.e., if $(M,\Gamma)$ is not a product, then $\op{rk} ECH(M,\Gamma,\xi)\geq 2$, where one of the generators is the empty set of orbits. The difference with the previous case is that the ECH differential counts curves with genus that might not preserve Nielsen classes.
\end{pb}

\begin{pb}
Show that a hypertight contact structure on a closed hyperbolic $3$-manifold has infinite rank cylindrical contact homology and that the rank of the subspace of its contact homology generated by orbits of action less than $L>0$ grows exponentially with $L$. See Problem~\ref{pb: fibration}.
\end{pb}

\section{From contact structures to foliations}

As we have already seen, a taut foliation $\F$ without sphere leaves can be approximated by a positive and a negative contact structure $\xi_+$ and $\xi_-$. Both are used to construct a weak filling and thus prove the nonvanishing of the contact invariant.

In \cite{CF}, the first author and Firmo studied the converse construction, based on prior results of Eliashberg-Thurston \cite{ET} and Mitsumatsu \cite{Mi}. Since the contact structures approximate the same foliation, they are $C^0$-close to each other, and in particular they have a common positively transverse vector field.

This motivates the following definition:

\begin{defn}
A {\em contact pair} is a pair $(\xi_+,\xi_-)$ consisting of a positive contact structure $\xi_+$ and a negative contact structure $\xi_-$ such that there exists a nonvanishing vector field $Z$ which is positively transverse to both $\xi_+$ and $\xi_-$.  

A contact pair is {\em tight} if both $\xi_+$ and $\xi_-$ are tight.
\end{defn}

In particular, if $\xi_+$ and $\xi_-$ are everywhere transverse, then $(\xi_+,\xi_-)$ is a contact pair.

Let $(\xi_+,\xi_-)$ be contact pair.  Generically $\xi_+$ and $\xi_-$ are transverse to each other away from a link $\Delta$ in $M$, on which they coincide as oriented $2$-planes. One can further normalize $\xi_+$ and $\xi_-$ on a neighborhood of $\Delta$ so it becomes a {\em normal contact pair}; see \cite[Section 2.3]{CF} for the definition and \cite[Prop.\ 2.1]{CF} which shows that any contact pair can be isotoped into a normal one. In particular, for a normal pair, the link $\Delta$ is transverse to $\xi_\pm$ away from a finite number of points.

If $\alpha_\pm$ is a contact form for $\xi_\pm$, then the smooth vector field $X \in \xi_-$ given by $$i_X d\alpha_-|_{\xi_-} =\alpha_+|_{\xi_-}$$ directs $\xi_+ \cap \xi_-$ and is zero along $\Delta$. For $x\in M$ and $t\in \R$, denote by $\phi_t(x)$ the image of $x$ by the time-$t$ flow of $X$.

\begin{lemma}[\cite{CF}]
If $(\xi_+,\xi_-)$ is a normal contact pair on a closed $3$-manifold $M$, then the plane fields
$$\lambda_\pm^t (x):=(\phi_t)_* \xi_\pm (\phi_{-t} (x))$$
limit to a common plane field $\lambda$ as $t\to +\infty$. Moreover $\lambda$ is continuous and invariant by the flow of the smooth vector field $X$.
\end{lemma}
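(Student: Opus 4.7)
The plan is to recognize the flow of $X$ as the projectively Anosov flow associated to the pair $(\xi_+,\xi_-)$ in the sense of Mitsumatsu: $X$ will turn out to lie in $\xi_+\cap\xi_-$, so $(\phi_t)_*$ preserves $\langle X\rangle$ and descends to a linear flow on the rank-two quotient bundle $E:=TM/\langle X\rangle$, in which the two line subbundles $\ell_\pm:=[\xi_\pm]$ bound a pair of invariant cones exhibiting uniform domination. The resulting $X$-invariant attracting line bundle $E^u\subset E$, direct-summed with $\langle X\rangle$, will be the sought plane field $\lambda$.

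First I would verify $X\in\xi_+\cap\xi_-$: the inclusion $X\in\xi_-$ is built in, and $\alpha_+(X)=(\alpha_+|_{\xi_-})(X)=(i_X d\alpha_-)(X)=d\alpha_-(X,X)=0$ gives $X\in\xi_+$. Since $\alpha_\pm(X)=0$, Cartan's formula then yields
$$\mathcal{L}_X\alpha_-\;=\;i_X d\alpha_-,\qquad \mathcal{L}_X\alpha_-\big|_{\xi_-}\;=\;\alpha_+\big|_{\xi_-}.$$
Moreover the auxiliary vector field $X'\in\xi_+$ uniquely defined by $i_{X'}d\alpha_+|_{\xi_+}=\alpha_-|_{\xi_+}$ lies automatically in $\xi_-$ (from $\alpha_-(X')=d\alpha_+(X',X')=0$), hence $X'=\mu X$ for a nowhere zero function $\mu$ on $M\setminus\Delta$, giving the dual relation $\mathcal{L}_X\alpha_+\big|_{\xi_+}=\mu^{-1}\alpha_-\big|_{\xi_+}$. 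Working in a local frame $(X,Y,Z)$ on $M\setminus\Delta$ with $Y\in\xi_-$ and $Z\in\xi_+$, the two identities pin the $Z$-component of $[X,Y]$ modulo $\langle X,Y\rangle$ and the $Y$-component of $[X,Z]$ modulo $\langle X,Z\rangle$ to opposite, nonvanishing signs---exactly the infinitesimal cone condition that forces the induced flow on $E$ to be dominated, producing $X$-invariant line bundles $E^u,E^s\subset E$.

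Standard linear dynamics for dominated splittings then gives, for every $x\in M\setminus\Delta$ and every line $\ell(\phi_{-t}x)\ne E^s$, exponential convergence $D\phi_t\,\ell(\phi_{-t}x)\to E^u(x)$, uniformly on compact subsets of $M\setminus\Delta$; applied to $\ell=\ell_\pm$ this is exactly $\lambda_\pm^t(x)\to\lambda(x):=\langle X(x)\rangle\oplus E^u(x)$. At points of the link $\Delta$, where $X$ vanishes, $\xi_+$ and $\xi_-$ already coincide and each point is fixed by $\phi_t$; I would use the local normal form for a normal contact pair from \cite[Section~2.3]{CF} to write down $d\phi_t$ transverse to $\Delta$, check that its forward-attracting subspace in $E$ extends $E^u$ continuously across $\Delta$, and set $\lambda|_\Delta:=\xi_\pm|_\Delta$. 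Compactness of $M$ upgrades the fiberwise convergence to uniform global convergence, so $\lambda$ is continuous, and flow invariance is the one-line reindexing
$$(\phi_s)_*\lambda(\phi_{-s}x)\;=\;\lim_{t\to\infty}(\phi_{t+s})_*\xi_\pm(\phi_{-(t+s)}x)\;=\;\lambda(x).$$
The main obstacle is precisely this analysis near $\Delta$: off $\Delta$ everything reduces to a clean dominated-splitting exponential estimate, but on $\Delta$ the generator $X$ vanishes, the uniform rate constants supplied by the infinitesimal computation degenerate, and one must press the normal form into service to extend $E^u$ continuously across $\Delta$ and match it with the transverse attracting direction of the fixed set $\Delta$.
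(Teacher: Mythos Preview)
The paper does not prove this lemma; it is quoted from \cite{CF}, so there is no proof here to compare against. Your strategy---recognizing $X$ as a projectively Anosov flow in the sense of Mitsumatsu and extracting $\lambda$ as $\langle X\rangle\oplus E^u$ from the resulting dominated splitting on $TM/\langle X\rangle$---is the correct and standard one, and is essentially the argument in \cite{CF}.

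Two points deserve care, however. First, your assertion that the $Z$-component of $[X,Y]$ and the $Y$-component of $[X,Z]$ have ``opposite, nonvanishing signs'' is not what one finds: in the model $\alpha_\pm=dz\pm r^2\,d\theta$ both components come out with the \emph{same} sign. The hyperbolicity of the induced projective flow on $E$ does not come from any such sign opposition but rather from the fact that $\xi_+$ is a positive and $\xi_-$ a negative contact structure, which forces $\ell_+$ and $\ell_-$ to rotate monotonically toward a common attracting direction under $(\phi_t)_*$.

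Second, and more substantially, the bundle $E=TM/\langle X\rangle$ is undefined along $\Delta$ because $X$ vanishes there, so talking about ``the forward-attracting subspace in $E$'' on $\Delta$ is not meaningful as stated. You must argue directly with plane fields: in the normal form one checks that the limit plane $\lambda$ computed off $\Delta$ extends continuously by $\xi_\pm|_\Delta$. This is genuinely sensitive to the \emph{direction} of the flow: with one sign of $X$ the limit off $\Delta$ is $\{dz=0\}$, which matches $\xi_\pm|_\Delta$; with the other sign the limit is $\langle\partial_r,\partial_z\rangle$, which does not extend continuously across $\Delta$. So verifying the correct orientation of $X$ against the normal form---the step you deferred---is exactly where continuity is won or lost.
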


The plane field $\lambda$ is then locally integrable, but not uniquely since $\lambda$ is only continuous.

\begin{pb}
Try to improve $\lambda$ so it becomes a genuine foliation.  One possible approach is to apply the work of Burago-Ivanov \cite{BI} to prove that $\lambda$ gives a {\em branching foliation}, in which case it could always be deformed into a genuine foliation.
\end{pb}

The integral leaves of $\lambda$ nevertheless reflect the properties of $\xi_+$ and $\xi_-$.

\begin{prop}[\cite{CF}]
If $(\xi_+,\xi_-)$ is a normal tight contact pair on a closed $3$-manifold $M$, then $\lambda$ has no integral $2$-sphere.
\end{prop}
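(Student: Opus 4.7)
The plan is to prove that $X$ lies in $\lambda$ everywhere, deduce that any hypothetical integral $2$-sphere $S$ of $\lambda$ must be tangent to $X$ and $\phi_t$-invariant, and then derive a contradiction from the fact that the flow of $X$ on $S$ would serve simultaneously as the characteristic foliation of a positive tight and of a negative tight contact structure.

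First, since $X$ directs $\xi_+\cap\xi_-$ away from $\Delta$ and vanishes on $\Delta$, the line $\langle X(x)\rangle$ is contained in $\xi_+(x)\cap\xi_-(x)$ for every $x\in M$. As $\phi_t$ tautologically preserves $X$, one has $X(x)=(\phi_t)_*X(\phi_{-t}(x))\in(\phi_t)_*\xi_\pm(\phi_{-t}(x))=\lambda_\pm^t(x)$ for every $t$, and passing to the limit yields $X\in\lambda$. An integral $2$-sphere $S$ for $\lambda$ is thus tangent to $X$, and since $\lambda$ is $\phi_t$-invariant, so is $S$. If $S$ missed $\Delta$, then $X|_S$ would be a nonvanishing vector field on $S\cong S^2$, contradicting $\chi(S^2)=2$; hence $S\cap\Delta\neq\emptyset$.

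Off $S\cap\Delta$, the identity $\xi_\pm(x)\cap T_xS=\xi_\pm(x)\cap\lambda(x)=\langle X(x)\rangle$ shows that both characteristic foliations $\xi_+S$ and $\xi_-S$ coincide with the flow of $X$ on $S$, with common singular set $S\cap\Delta$; at each such $p$, $\xi_+(p)=\xi_-(p)=T_pS$ as oriented planes, so both the sign of $p$ as a singularity and its linearization type are the same for $\xi_+S$ and $\xi_-S$. The key step is to see that this is incompatible with $\xi_+$ being positive tight and $\xi_-$ being negative tight: realizing the characteristic foliation as the Hamiltonian flow of $\alpha_\pm|_S$ with respect to the area form $d\alpha_\pm|_{\xi_\pm}$ on the contact planes, the sign of the divergence at a singularity is controlled by the sign of $d\alpha_\pm|_{\xi_\pm}$. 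Since $d\alpha_+|_{\xi_+}>0$ while $d\alpha_-|_{\xi_-}<0$, the same singular flow on $S$ would have an elliptic point of opposite signs for $\xi_+$ and for $\xi_-$, yielding the desired contradiction.

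The main obstacle is carrying out this sign comparison rigorously, since the two characteristic foliations coincide only as unoriented singular flows while the contact-geometric sign of a singularity depends on orientation data together with the symplectic sign of $d\alpha_\pm|_{\xi_\pm}$. I would handle this by first working in local normal-form models near each $p\in S\cap\Delta$ provided by the normalization of a contact pair \cite[Section~2.3]{CF}, in which the singularity types of $\xi_+S$ and of $\xi_-S$ can be read off explicitly and shown to be incompatible, and then globalizing either through Giroux's single-dividing-circle criterion for a sphere in a tight contact manifold (applied in parallel to $\xi_+$ and to $\xi_-$, so that the two forced single-circle dividing sets $\Gamma_+$ and $\Gamma_-$ on $S$ become mutually incompatible) or via a direct construction of an overtwisted disk in one of $\xi_+,\xi_-$, contradicting the tight hypothesis.
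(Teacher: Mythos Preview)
The paper does not give a proof of this proposition; it simply records the statement with a citation to \cite{CF}, so there is no in-paper argument to compare against. Your overall strategy---show $X\in\lambda$, hence $X$ is tangent to any integral sphere $S$, hence the flow of $X|_S$ underlies both characteristic foliations $\xi_\pm S$, and then extract a contradiction from the simultaneous tightness of a positive and a negative structure sharing this data---is the natural one and is almost certainly the route taken in \cite{CF}. (One small slip: the $\phi_t$-invariance of $S$ follows from $X$ being tangent to $S$, not from the invariance of $\lambda$; the latter only tells you that $\phi_t(S)$ is again an integral surface, possibly a different one.)

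That said, the outline has genuine gaps beyond the one you flag. You assert that for $p\in S\cap\Delta$ one has $\xi_+(p)=\xi_-(p)=T_pS$, but $T_pS=\lambda(p)=\lim_{t\to\infty}(d\phi_t)_p\,\xi_\pm(p)$ is determined by the linearized flow at the fixed point $p$ and need not equal $\xi_\pm(p)$; settling this, and ruling out additional singularities of $\xi_\pm S$ at points of $S\setminus\Delta$ where $\xi_\pm$ happens to coincide with $\lambda$, requires precisely the normal-form computation near $\Delta$ that you defer. The ``sign incompatibility'' itself also remains a heuristic: the claim that an elliptic point acquires opposite signs for $\xi_+$ and $\xi_-$ is not yet a contradiction until you carefully separate which orientation data are shared (the flow of $X|_S$, the coorientation at $p\in\Delta$) from which are reversed (the sign of $d\alpha_\pm|_{\xi_\pm}$), and show these cannot be reconciled. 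Your proposed remedies are plausible, but invoking Giroux's single-circle criterion in parallel needs $S$ to be convex for each of $\xi_\pm$, which you have not arranged and cannot obtain by freely perturbing $S$, since $S$ is forced on you by $\lambda$. As written, this is a correct strategic sketch rather than a proof.
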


Moreover, we have:

\begin{thm}[\cite{CF}] \label{thm: reeb}
Let $(\xi_+,\xi_-)$ be a normal tight contact pair on a closed $3$-manifold $M$.  If $\lambda$ is uniquely integrable, then the integral foliation $\F$ of $\lambda$ does not contain a Reeb component whose core $c$ is zero in $H_1(M;\Q)$. Moreover, $M$ carries a Reebless foliation.
\end{thm}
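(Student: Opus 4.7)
I would argue by contradiction for the first statement. Suppose $\F$ contains a Reeb component $V\cong S^1\times D^2$ with boundary torus $T=\partial V$ and core $c$ satisfying $[c]=0\in H_1(M;\Q)$. Two observations drive the proof. First, since $X\in\xi_-$ and $\lambda(x)=\lim_{t\to+\infty}(\phi_t)_*\xi_-(\phi_{-t}(x))$, passing to the limit in the identity $X(x)=(\phi_t)_*X(\phi_{-t}(x))$ yields $X(x)\in\lambda(x)$; by unique integrability of $\lambda$, $X$ is then tangent to every leaf of $\F$, in particular to $T$ and to each plane leaf of $V$. Second, the plane fields $\lambda_\pm^t$ are diffeomorphic copies of $\xi_\pm$, hence tight contact structures, and they $C^0$-converge to $T\F$ as $t\to+\infty$, so $\F$ is simultaneously a $C^0$-limit of tight positive contact structures and of tight negative contact structures.

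The heart of the argument is an Eliashberg-Thurston-type obstruction: a foliation that is a $C^0$-limit of tight contact structures cannot contain a Reeb component with $\Q$-null-homologous core. I would establish this as follows. Choose $N\geq 1$ so that $Nc$ bounds an oriented surface $\Sigma$ in $M$, and, using the tangency of $X$ to $\F$ to guide the isotopy, arrange $\Sigma\pitchfork T$ with $\Sigma\cap V$ a surface whose boundary consists of $Nc$ together with longitudinal curves on $T$. The characteristic foliation of $\F$ on a meridian disk of $V$ is concentric circles around $c$ spiraling onto $T$; consequently, on $\Sigma\cap V$ the characteristic foliation has a center-type singularity along $c$ and circles accumulating on the longitudinal boundary curves. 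For $\lambda_+^t$ with $t\gg 0$ this center-plus-limit-cycle picture is preserved, and a standard convex-surface analysis on $\Sigma$ then produces either an overtwisted disk for $\lambda_+^t$ or a transverse curve along $c$ violating the Bennequin inequality, contradicting tightness of $\lambda_+^t$.

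For the second statement I use a dichotomy. If $\F$ has no Reeb components, then $\F$ itself is Reebless. Otherwise, by the first part, every Reeb component of $\F$ has core representing a nontorsion class in $H_1(M;\Q)$, whence Poincar\'e duality gives $H_2(M;\Z)\neq 0$. One can then apply Gabai's construction (Theorem~\ref{thm: Gabai sutured hierarchy}) to the relevant prime factor of $M$ and assemble a Reebless foliation on $M$ from Reebless (indeed taut) foliations on its prime pieces.

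The main obstacle is rigorously executing the Eliashberg-Thurston obstruction in the second paragraph. Two delicate points stand out: (i) controlling the surface $\Sigma$ near $T$ and near the link $\Delta=\{\xi_+=\xi_-\}$ where $X$ vanishes, for which the normal-pair hypothesis from \cite{CF} is essential to keep $\Delta$ from distorting the $X$-tangency picture; and (ii) upgrading the $C^0$-convergence $\lambda_+^t\to T\F$ to enough control on the characteristic foliations of $\lambda_+^t$ on meridian disks of $V$ to produce a genuine overtwisted disk. The convergence is given only in $C^0$, while the geometric conclusion (overtwisted disk, or Bennequin violation) is a $C^1$-type statement; the argument therefore hinges on exploiting the explicit Reeb-component model to show that the relevant features of the characteristic foliation are already stable under $C^0$-perturbation.
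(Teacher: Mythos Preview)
The present paper does not prove this theorem: it is quoted from \cite{CF} with no argument supplied here. There is therefore no proof in this document against which to compare your proposal.

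That said, some comments on the proposal itself. Your first observation, that $X$ is tangent to the leaves of $\F$, is correct and is a basic structural feature of the limit plane field. Your overall strategy for the first assertion --- approximate $\F$ by the tight contact structures $\lambda_\pm^t$ and derive overtwistedness from a Reeb component with $\Q$-nullhomologous core --- is the natural one, and you have correctly isolated the real difficulty: the convergence $\lambda_\pm^t\to T\F$ is only $C^0$, and extracting an overtwisted disk (or a Bennequin violation) from the Reeb-component model under mere $C^0$-closeness is precisely the substance of the result. Your candid acknowledgment that this step is not carried out is appropriate; it is not a detail that can be filled in routinely.

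Your argument for the second assertion, however, has a genuine gap. You invoke Gabai once $H_2(M;\Z)\neq 0$, but Gabai's construction requires irreducibility, which is not part of the hypotheses; indeed Problem~\ref{pb: irreducible} in this very paper asks whether a tight contact pair forces irreducibility, so you cannot assume it. Your fallback of passing to prime summands and reassembling is not obviously viable: a summand with trivial $H_2$ need not admit any Reebless foliation, and there is no general procedure for gluing Reebless foliations across connect-sum spheres. More conceptually, abandoning $\F$ altogether in favor of an unrelated Gabai foliation discards all the information coming from the contact pair; one expects the argument in \cite{CF} to stay with $\F$ and modify it near its Reeb components (now known to have homologically essential cores) rather than to start from scratch.
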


\begin{rmk}
Conversely, if the integral foliation $\F$ of $\lambda$ is taut, then the structures $\xi_\pm$ are universally tight, since they can be deformed continuously to $\F$.
\end{rmk}

\begin{pb}
Show that under the hypotheses of Theorem~\ref{thm: reeb}, the foliation $\F$ has no Reeb components (even when $0\not=[c]\in H_1(M;\Q)$).
\end{pb}

A contact pair $(\xi_+,\xi_-)$ is {\em strongly tight} if any two points in $M$ can be joined by an arc that is positively transverse to both $\xi_+$ and $\xi_-$. This condition is equivalent to the property that any two points in $M$ can be joined by an arc positively transverse to $\lambda$.

Suppose $(\xi_+,\xi_-)$ is strongly tight.  If $\lambda$ is uniquely integrable, then its integral foliation $\F$ is taut and $\xi_+$ and $\xi_-$ are universally tight and semi-fillable. 
The key step in the construction of a semi-filling is the existence of a closed $2$-form $\omega$, called a {\em dominating $2$-form}, which satisfies $\omega|_{\F}>0$.  The usual construction of a dominating $2$-form $\omega$ (i.e., summing well-chosen Poincar\'e duals $\omega_\gamma$ supported on small tubular neighborhoods of closed transversals $\gamma$ to $\xi_\pm$) works even when $\lambda$ is not integrable.
Hence $\xi_+$ and $\xi_-$ are semi-fillable even when $\lambda$ is not assumed to be uniquely integrable.

In certain situations, one can prove the integrability of $\lambda$ and derive strong constraints, as in this version of the Reeb stability theorem.

\begin{thm}[\cite{CF}]  \label{thm: sphere}
Let $M$ be a compact connected oriented $3$-manifold with $\partial M=S^2$. If $M$ carries a normal tight contact pair $(\xi_+,\xi_-)$ such that
\be
\item $\Delta$ intersects $\partial M$ transversely in two points $N$ and $S$ and
\item $X$ exits transversely from $M$ along $\partial M\setminus \{ N,S\}$,
\ee
then $M\simeq B^3$.
\end{thm}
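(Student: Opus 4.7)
The plan is to interpret $\partial M = S^2$ as a convex sphere for the contact structure $\xi_+$ (equivalently $\xi_-$) with a standard dividing set, and then invoke a classification result that forces $M \simeq B^3$.

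First I would analyze the characteristic foliation $\xi_+\partial M$ (and symmetrically $\xi_-\partial M$). Since $X$ directs $\xi_+\cap \xi_-$ on $M \setminus \Delta$, we have $X \in \xi_+$ and $X \in \xi_-$ on that open set. By hypothesis (2), $X$ is positively transverse to $\partial M$ everywhere on $\partial M \setminus \{N,S\}$, so both $\xi_+$ and $\xi_-$ contain a vector transverse to $\partial M$ at these points and are therefore not tangent to $\partial M$ there. Hence the characteristic foliations $\xi_\pm \partial M$ have singularities at most at $N$ and $S$. By Poincar\'e--Hopf on $S^2$, the indices sum to $2$; since the only generic singularities of a contact characteristic foliation are elliptic ($\pm 1$) or hyperbolic ($-1$), both $N$ and $S$ must be positive elliptic singularities of index $+1$. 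One is a source and the other a sink by the orientation of the flow, and every non-singular orbit goes from one to the other. This is exactly the model of the standard tight characteristic foliation on $\partial B^3 \subset (\R^3, \xi_{\op{std}})$.

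Next, after a $C^\infty$-small perturbation $\partial M$ becomes convex for $\xi_+$, with dividing set a single embedded circle separating $N$ from $S$: this follows from Giroux's criterion once the Morse--Smale characteristic foliation is in the above standard form, since its positive and negative regions are disks.

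Finally I would conclude $M \simeq B^3$ by invoking the topological consequence of Eliashberg's classification: cap off $\partial M$ with a standard tight ball $(B^3, \xi_{\op{std}})$, whose dividing set on the boundary matches $\Gamma_{\partial M}$, to produce a closed tight contact manifold $(\widehat{M}, \widehat{\xi})$. A closed tight contact 3-manifold is either irreducible or equals $(S^2 \times S^1, \xi_{\op{std}})$. The second case is excluded using the precise dynamics of $X$: in $S^2 \times S^1 \setminus B^3$ no vector field tangent to $\xi_-$ and directing $\xi_+ \cap \xi_-$ can have its zero set reduce to a link meeting $\partial M$ transversely only at two points while exiting elsewhere, because such an $X$ must then retract $M$ onto a contractible graph, incompatible with $\pi_1 = \Z$. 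Hence $\widehat{M}$ is irreducible, so the embedded sphere $\partial M \subset \widehat{M}$ bounds a ball on each side; the capped side is already a ball, and the other side is $M$, giving $M \simeq B^3$.

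The main obstacle is the last step, namely ruling out $\widehat{M} = S^2 \times S^1$, which requires the structure of $X$ and $\Delta$ rather than just convex surface theory. The earlier steps are relatively routine, using the normal form of a contact pair near $\Delta$ from \cite[Sec.~2.3]{CF} to justify that the singularities at $N$ and $S$ are genuine positive elliptic points and the standard Giroux criterion for convexity.
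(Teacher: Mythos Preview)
Your approach diverges from the paper's, which does not use convex surface theory at all: the paper shows that under the hypotheses the limit plane field $\lambda$ is uniquely integrable and its leaves are disks, then concludes $M\simeq B^3$ by a Reeb-stability argument. That argument genuinely uses the pair $(\xi_+,\xi_-)$ through the dynamics of $X$ and the construction of $\lambda$, not just the tightness of one of the two contact structures.

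Your proposal, by contrast, tries to extract the conclusion from the tightness of $\xi_+$ alone after capping off, and this is where it breaks. The assertion ``a closed tight contact $3$-manifold is either irreducible or equals $(S^2\times S^1,\xi_{\op{std}})$'' is false: connected sums of tight contact manifolds are tight (this is precisely \cite{Co1}, cited in the paper), so for instance $\mathbb{RP}^3\#\mathbb{RP}^3$ is a closed reducible tight contact manifold that is not $S^2\times S^1$. Hence knowing that $\widehat M$ is tight and contains the sphere $\partial M$ bounding a ball on one side tells you nothing about the other side. You may be conflating this with the Novikov--Rosenberg theorem for Reebless foliations, where the dichotomy does hold; but that is exactly why the paper passes through the foliation $\lambda$ rather than staying in the contact world.

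Your attempted rescue for the $S^2\times S^1$ case (``$X$ must retract $M$ onto a contractible graph'') is also not justified: $X$ vanishes on the link $\Delta$, which a priori has closed components in $\op{int}(M)$ in addition to the arc from $N$ to $S$, and the backward flow of $X$ need not converge to $\Delta$. Even if this step were made rigorous it would only exclude $\pi_1=\Z$, not the reducible examples above. The earlier analysis of the characteristic foliation on $\partial M$ is reasonable, but it cannot by itself carry the argument; you need the foliation $\lambda$ (or some other input using both $\xi_+$ and $\xi_-$ simultaneously) to control the interior of $M$.
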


The proof of Theorem~\ref{thm: sphere} relies on the fact that, under the assumptions of the theorem, the foliation $\lambda$ is integrable and the leaves are disks.

\begin{pb}\label{pb: irreducible}
Show that if $M$ carries a tight contact pair, then it is irreducible.
\end{pb}

In general, it is not easy to verify the condition in the definition of a contact pair that $\xi_+$ and $\xi_-$ have a common transverse vector field $Z$.

\begin{pb}\label{pb: elimination}
Let $\xi_+$ be a positive contact structure and $\xi_-$ be a negative contact structure such that $\xi_+$ and $\xi_-$ are homotopic as plane fields. Can one isotop $\xi_-$ to $\xi_-'$ so that $(\xi_+,\xi_-')$ is a contact pair? Can this be done with parameters?
\end{pb}

The parametric version of Problem~\ref{pb: elimination} can be solved on the ball, where it is in fact equivalent to Cerf's theorem or Hatcher's theorem:  Let $\zeta_{0,+}$ and $\zeta_{0,-}$ be the standard contact structures on the unit ball $B^3$, with equations $dz\pm r^2 d\theta =0$ in cylindrical coordinates. In this case $X=r\partial_r$ and $\lambda =\{ dz=0\}$. Now take a diffeomorphism $\phi$ of $B^3$ which is the identity on the boundary and consider $\xi_{1,\pm} =\phi_* \xi_{0,\pm}$. By Eliashberg~\cite{El2}, we know that the space of (positive or negative) tight contact structures on $B^3$ that agree with $\xi_{0,\pm}$ along $\bdry B^3$ is contractible, so we can find paths of contact structures $\xi_{t,\pm}$, $t\in [0,1]$, from $\xi_{0,\pm}$ to $\xi_{1,\pm}$. If we could make $(\xi_{t,+},\xi_{t,-})$ into a path of contact pairs by deforming $\xi_{t,-}$, this would immediately give a path $\lambda_t$ of integral foliations by disks from $\lambda_0$ to $\lambda_1$ and then, by standard arguments, an isotopy from $\phi$ to the identity, i.e., an alternative proof of Cerf's theorem --- and with more parameters, a proof of Hatcher's theorem.  On the other hand, we know that such a deformation is possible: it can be constructed by an application of Hatcher's theorem.

\begin{pb}
Prove Hatcher's theorem using the parametric version of Problem \ref{pb: elimination}.
\end{pb}

\begin{pb}
Try to use families of contact pairs to solve Problem \ref{pb: fibration2}.
\end{pb}

\end{document}